\theoremstyle{plain}
\newtheorem{theorem}{Theorem}[section]
\newtheorem{corollary}[theorem]{Corollary}
\newtheorem{lemma}[theorem]{Lemma}
\theoremstyle{definition}
\newtheorem{remark}[theorem]{Remark}
\newtheorem{example}[theorem]{Example}
\newtheorem{definition}[theorem]{Definition}
\newtheorem*{notation}{Notation}
\numberwithin{equation}{section}
\DeclareMathOperator*{\var}{var}
\DeclareMathOperator*{\osc}{osc}
\newcommand{\norm}[1]{\left\|#1\right\|}
\newcommand{\abs}[1]{\lvert#1\rvert}
\newcommand{\babs}[1]{\bigl|#1\bigr|}
\newcommand{\bgabs}[1]{\biggl|#1\biggr|}
\newcommand{\bnorm}[1]{\bigl\|#1\bigr\|}
\newcommand{\bgnorm}[1]{\biggl\|#1\biggr\|}
\newcommand{\bset}[2]{\bigl\{#1:#2\bigr\}}
\newcommand{\bgset}[2]{\biggl\{#1:#2\biggr\}}
\newcommand{\absp}[1]{\left.\!\!\left\bracevert\!\! \vphantom{Iy}#1\!\! \right\bracevert\!\!\right.}
\newcommand{\zdef}{{\mathrel{\mathop:}=}}
\newcommand{\defz}{{=\mathrel{\mathop:}}}
\author[D.~Bugajewska]{Daria Bugajewska}
\address{Daria Bugajewska, Optimization and Control Theory Department,
Faculty of Mathematics and Computer Science,
Adam Mickiewicz University,
ul.\ Umultowska 87,
61\nobreakdash-614 Pozna\'n,
Poland}
\email{dbw@amu.edu.pl}
\author[G.~Infante]{Gennaro Infante}
\address{Gennaro Infante, Dipartimento di Matematica e Informatica,
Universit\'a della Calabria,
87036 Arcavacata di Rende,
Cosenza,
Italy}
\email{gennaro.infante@unical.it}
\author[P.~Kasprzak]{Piotr Kasprzak}
\address{Piotr Kasprzak, Optimization and Control Theory Department,
Faculty of Mathematics and Computer Science,
Adam Mickiewicz University,
ul.\ Umultowska 87,
61\nobreakdash-614 Pozna\'n,
Poland}
\email{kasp@amu.edu.pl}
\title[Solvability of integral equations with applications to BVPs]{Solvability of Hammerstein integral equations\\ with applications to boundary value problems}
\subjclass[2010]{Primary 45G99; Secondary 34B10, 47H10, 47H30.}
\keywords{Boundary value problem, cone, Hammerstein integral equation, functions of bounded variation.}
\begin{document}

\begin{abstract}
In this paper we present some new results regarding the solvability of nonlinear Hammerstein integral equations in a special cone of continuous functions. The proofs are based on a certain fixed point theorem of Leggett and Williams type. We give an application of the abstract result to prove the existence of nontrivial solutions of a periodic boundary value problem. We also investigate, via a version of Krasnosel{\cprime}slki{\u\i}'s theorem for the sum of two operators, the solvability of perturbed Hammerstein integral equations in the space of continuous functions of bounded variation in the sense of Jordan. As an application of these results, we study the solvability of a boundary value problem subject to integral boundary conditions of Riemann--Stieltjes type. Some examples are presented in order to illustrate the obtained results.
\end{abstract}

\maketitle

\section{Introduction}
Numerous problems of various branches of science lead to the necessity of investigating the solvability of nonlinear Hammerstein integral equations. It is for this reason the theory of nonlinear integral equations has become an important part of nonlinear functional analysis and has attracted the interest of many mathematicians.

In the first part of the paper we study the existence of eigenvalues of Hammerstein integral equations of the form
\begin{equation}\label{eqh-intro}
\lambda x(t)=\int_{\overline{\Omega}} k(t,s)f(s,x(s)) \textup ds,\quad t\in\overline{\Omega},
\end{equation}
where $\Omega$ is an open and bounded subset of $\mathbb R^n$, $k$ is allowed to change sign and $f$ is non-negative. 

A number of tools have been utilized to study the solvability of~\eqref{eqh-intro}; for example, variational methods have been employed in the case of \emph{symmetric} kernels by Faraci~\cites{F} and Faraci and Moroz~\cite{FM}, topological methods have been used by 
Lan~\cites{KL1,KL2,KL3}, Lan and Webb~\cite{LWe}  and Ma~\cite{Ma00}. In particular, Lan~\cite{KL1} proved the existence of a positive eigenfunction under non-negativity assumptions on the kernel $k$. The results of~\cite{KL1} were complemented by Infante~\cite{giems}, who proved, under weaker assumptions regarding  the sign of the kernel $k$, the existence of eigenfunctions within a cone of functions that are allowed to \emph{change sign}, namely 
\begin{equation}\label{cone-chs}
K= \bgset{x \in C(\overline{\Omega})}{ \min_{t \in \Omega_0} x(t) \geq c\norm{x}_{\infty}},
\end{equation}
where $\Omega_0\subseteq\overline{\Omega}$ is a closed set of positive Lebesgue measure. This type of cone has been introduced by Infante and Webb in~\cite{I-W1}. Let us note that the functions in~\eqref{cone-chs} are positive on the subset $\Omega_0$ but are allowed to change sign elsewhere.

A key assumption in~\cite{giems} is the positivity of the kernel $k$ on $\Omega_0 \times \overline{\Omega}$. Here we drop this condition and utilize instead the cone
\begin{equation}\label{cone-posevg}
 C= \bgset{x \in C(\overline{\Omega})}{\int_{\Omega_0} x(t) \textup dt \geq c\norm{x}_{\infty}}.
\end{equation}
Let us note that we do not require  the functions in $C$ to be \emph{positive} in $\Omega_0$, but only to have a  \emph{positive average} in this subset. In Section~\ref{sec:LW} we use a recent Leggett--Williams type theorem due to Bugajewski and Kasprzak~\cite{BK} in order to prove the existence of eigenfunctions for~\eqref{eqh-intro} in the cone~\eqref{cone-posevg}. Let us note that we do not require symmetry for our kernels.

Motivated by the works of Graef, Kong and Wang~\cite{G} and Webb~\cite{W2}, we apply our results to study the periodic boundary value problem (BVP)
\begin{gather}
x''(t)+\omega^2 x(t)=\lambda f(t,x(t)), \quad t \in [0,1], \label{eq:1}\\
x(0)=x(1), \quad x'(0)=x'(1). \label{eq:2}
\end{gather}

Our approach, in comparison with the ones used in the papers~\cites{G, W2}, has the advantage of working for all admissible values of the parameter $\omega$, that is, for all $\omega >0$ such that $\omega \neq 2n\pi$, $n \in \mathbb N$ (in other words, we are able to prove the existence of solutions to the periodic BVP~\eqref{eq:1}--\eqref{eq:2} even in the case when the corresponding Green's function takes negative values -- see Theorem~\ref{thm:existence_LW}). It also enables us to pinpoint the localization of the solution by means of the supremum and the integral norm. Let us add that if $\omega \in (0,\pi]$ our result ensures the existence of positive solutions to~\eqref{eq:1}--\eqref{eq:2} for some $\lambda>0$.

Note that, although we impose strictly weaker conditions on the function $f$ than the authors of~\cites{G,W2}, our result -- Theorem~\ref{thm:existence_LW} -- is in a sense `incomparable' with the existence results obtained in~\cites{G,W2} (for the discussion of the relation of Theorem~\ref{thm:existence_LW} and the theorems from~\cites{G,W2} we refer the reader to Remark~\ref{rem:inc} in Section~\ref{sec:BVP_LW}).

In the second part of the paper (see Section~\ref{sec:Kras}), by means of some version of Krasnosel{\cprime}slki{\u\i}'s theorem for the sum of two operators, we study the existence of solutions of the following perturbed nonlinear Hammerstein integral equation
\begin{equation}\label{eq:bvp_hammerstein-intro}
 x(t)=\alpha[x]v(t) + \beta[x]w(t) + \lambda \int_0^1 k(t,s)f(s,x(s))\textup ds, \quad t \in [0,1],
\end{equation}
in the space $CBV[0,1]$ consisting of continuous functions of bounded variation in the sense of Jordan; here $\lambda \in \mathbb R$, $\alpha, \beta \in CBV^*[0,1]$ and $v,w \in CBV[0,1]$. Let us add that the approach via Krasnosel{\cprime}slki{\u\i}'s fixed point theorem relies heavily on the ideas and techniques concerning the compactness of nonlinear integral operators in the space of functions of bounded variation developed in~\cite{BGK}. (It may seem surprising, but sufficient conditions guaranteeing the compactness of Hammerstein integral operators in the space $BV[0,1]$ of functions of bounded variation, were not described until very recently -- for more details see~\cite{BGK}.)

The motivation to seek solutions of bounded variation is connected with their numerous applications (see e.g. \cites{AFP,Maz,Wa}). 
Such functions can be used to describe some real world phenomena; for example, functions of bounded variation appear in mathematical biology or economics (see~\cites{Br, HL}). Furthermore, it turns out that by
a suitable choice of the space of functions of bounded variation it is possible, for example, to obtain solutions to certain nonlinear Hammerstein integral equations which are constant on each interval of continuity (for more details see \cite{BBL}).

The necessity of studying the integral equation \eqref{eq:bvp_hammerstein-intro} in the space $CBV[0,1]$ comes, \emph{for example}, from the fact that it naturally arises, when dealing with BVPs 
\begin{equation}
x''(t)=-\lambda f(t,x(t)), \qquad t \in [0,1],\label{eq:3}
\end{equation}
with non-local boundary conditions (BCs) of the form
\begin{equation}\label{eq:4}
 x(0)=\int_0^1 A(s)\textup dx(s) \quad \text{and} \quad x(1)=\int_0^1 B(s)\textup dx(s),
\end{equation}
or
\begin{equation}\label{eq:4'}
x(0)=\int_0^1 x(s)\textup dA(s) \quad \text{and} \quad x(1)=\int_0^1 x(s)\textup dB(s);
\end{equation}
let us add that the integrals occurring in~\eqref{eq:4} and~\eqref{eq:4'} are understood in the Riemann--Stieltjes sense.

In the context of ODEs the study of non-local BCs can be traced back to Picone~\cite{Picone}, who investigated multi-point BCs. For an introduction to non-local problems we refer to the reviews~\cites{Conti, rma, sotiris, Stik, Whyburn} as well as  the
papers~\cites{kttmna, ktejde, I-W3, Bsz}.

The existence of positive solutions (and their multiplicity) to the BVP \eqref{eq:3}--\eqref{eq:4'} was studied, for example, in \cites{I2009, W1, I-W2, I-W3}. In this paper (see Section~\ref{ssec:BVP_nl}), using a different approach from the one used in the above-mentioned articles, we establish existence type results for the BVPs with non-local integral conditions \eqref{eq:3}--\eqref{eq:4} and \eqref{eq:3}--\eqref{eq:4'}. Furthermore, we provide some examples of multi-point BVPs to which our results apply (see Examples~\ref{ex:111} and~\ref{ex:3}).  

\section{Preliminaries}
The aim of this Section is to fix the notation and to recall some basic definitions and facts which will be used in the sequel.

\begin{notation}
The closed ball in a normed space $X$ with center at $x$ and radius $r \in (0,+\infty)$ will be denoted by $B_X(x,r)$. For simplicity, instead of $B_{\mathbb R}(x,r)$ we will simply write $[x-r,x+r]$. The symbol $\theta$ will stand for the zero element of the normed space $X$.

If $\Omega$ is an open and bounded subset of $\mathbb R^n$, then by $C(\overline{\Omega})$ we will denote the Banach space of all continuous real-valued functions defined on $\overline{\Omega}$, endowed with the supremum norm $\norm{\cdot}_{\infty}$. In the particular case when $\Omega$ is an open interval, that is $\Omega=(a,b)$, we will write $C[a,b]$ instead of $C(\overline{(a,b)})$ or $C([a,b])$. Moreover, by $BV[a,b]$ we will denote the Banach space of all real-valued functions defined on $[a,b]$ of bounded variation (in the sense of Jordan), endowed with the norm $\norm{f}_{BV}\zdef \abs{f(a)} + \var_{[a,b]} f$, and by $CBV[a,b]$ its closed subspace consisting of continuous functions; here the symbol $\var_{[a,b]} f$ denotes the (Jordan) variation of the function $f \colon [a,b] \to \mathbb R$, that is, 
\[
\var_{[a,b]} f=\sup_{\pi}\sum\limits_{i=1}^n\abs{f(t_i)-f(t_{i-1})},
\]
where the supremum is taken over all finite partitions $\pi : a=t_0<t_1<\ldots<t_n=b$ of the interval $[a,b]$. If no confusion concerning the interval over which we compute the variation can arise, instead of $\var_{[a,b]} f$ we will simply write $\var f$. Let us also recall that $BV$-functions are bounded and $\norm{f}_{\infty} \leq \norm{f}_{BV}$ for every $f \in BV[0,1]$. As usual, by $CBV^{\ast}[a,b]$ we will denote the dual space of $CBV[a,b]$, that is, the space of all continuous linear functionals $\alpha \colon CBV[a,b] \to \mathbb R$. For a thorough treatment of functions of bounded variation of various kinds we refer the reader to~\cite{ABM}. 

Although throughout the paper we will use the same symbol `$\int$' to denote both the Lebesgue and the Riemann--Stieltjes integral, it should always be clear from the context which integral we use. The Lebesgue measure in $\mathbb R^n$ will be denoted by $\mu$.
\end{notation}

\subsection{Partially ordered structures}

Now, we are going to recall some definitions concerning partially ordered structures.

\begin{definition}[cf.~\cite{LW}*{p.~249}]\label{def:cone}
Let $X$ be a real normed space. A non-empty closed and convex set $C_X \subseteq X$ is called a~(positive) cone if the following conditions are satisfied:
\begin{enumerate}[label=(\alph*)]
 \item\label{it:cone_1} if $x \in C_X$ and $\lambda \geq 0$, then $\lambda x \in C_X$;
 \item\label{it:cone_2} if $x \in C_X$ and $-x \in C_X$, then $x=\theta$.
\end{enumerate}
\end{definition}

\begin{notation}
If $C_X$ is a cone in a normed space $X$, then by $C_X(\theta,r)$ we will denote the intersection of $C_X$ and $B_X(\theta,r)$, that is, $C_X(\theta,r)\zdef C_X \cap B_X(\theta,r)$.
\end{notation}

\begin{remark}
A cone $C_X$ in a normed space $X$ induces a partial order $\preceq$ given by the following formula
\begin{equation}\label{porzadek}
 x \preceq y \qquad \text{if and only if} \qquad y-x \in C_X.
\end{equation}
Let us note that the relation $\preceq$ is compatible with the linear structure of the normed space $X$, that is, if $x \preceq y$, then $x+z \preceq y+z$ and $\lambda x\preceq \lambda y$ for all $x,y,z \in X$ and all $\lambda \geq 0$.
\end{remark}

In the sequel, considering an ordered normed space with a cone, we will always assume that the partial order is defined by the formula~\eqref{porzadek}. 

\begin{definition}[cf.~\cite{LW}*{p.~249}]
A real Banach space  endowed with the partial order induced by a cone is called an \emph{ordered Banach space}.
\end{definition}

\subsection{Fixed point theorems}

Let us begin with recalling the following  extension of the Leggett--Williams theorem.

\begin{theorem}[\cite{BK}*{Theorem~4}]\label{thm:LW2}
Let $(X, \norm{\cdot})$ be an ordered Banach space with a cone $C_X$ and suppose $F \colon C_X(\theta,r) \to C_X$, where $r>0$, is a compact mapping. Moreover, assume that a continuous seminorm  $\absp{\vphantom{I}\cdot} \colon X \to [0,+\infty)$, together with positive numbers $m,M$ and $\delta$ satisfy the conditions\textup:
\begin{enumerate}[label=\textup{(\roman*)}, itemsep=2mm]
 \item\label{it:ar_2} $\norm{x}\leq M \absp{\vphantom{I}x}$ if $x \in C_X$\textup;
 \item\label{it:ar_3} $m \leq rM^{-1}$ and $\absp{\vphantom{I}x}=m$ for some $x \in C_X(\theta,r)$\textup;
 \item\label{it:LW2_4} $\absp{F(x)}\geq \delta$ if $x \in C_X(\theta,r)$ and $\absp{\vphantom{I}x}=m$. 
\end{enumerate}
Then there exist $\lambda_0 >0$ and $x_0 \in C_X(\theta,r) \setminus \{\theta\}$ such that $F(x_0)=\lambda_0 x_0$ and $\absp{\vphantom{I}x_0}=m$.
\end{theorem}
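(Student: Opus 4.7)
The plan is to proceed by contradiction, using two complementary fixed-point index computations for the compact operator $F$ on a suitable seminorm ball in $C_X$.

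First I would set up the geometry. Let $\Omega \zdef \{x \in C_X : \absp{\vphantom{I}x} < m\}$; this is open in $C_X$ because the seminorm is continuous, and condition~(i) together with $m \leq rM^{-1}$ from~(ii) gives $\overline{\Omega} \subseteq C_X(\theta, r)$, so that $F$ is compact on $\overline{\Omega}$. Note that $\theta \in \Omega$, while~(ii) ensures that the relative boundary $\partial \Omega = \{x \in C_X : \absp{\vphantom{I}x} = m\}$ is non-empty.

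Now assume, for contradiction, that the conclusion fails, i.e., $F(x) \neq \lambda x$ for every $x \in \partial \Omega$ and every $\lambda > 0$. Then the cone fixed-point index $i_{C_X}(F, \Omega)$ is well-defined, and I would compute it in two ways. For the first value, the scaling homotopy $H(t, x) \zdef tF(x)$, $t \in [0, 1]$, is admissible: on $\partial \Omega$, the identity $tF(x) = x$ would either force $F(x) = (1/t)x$ with $1/t > 0$ (forbidden by the contradiction hypothesis) or, at $t = 0$, require $x = \theta \notin \partial \Omega$. Homotopy invariance and normalization of the cone index then yield $i_{C_X}(F, \Omega) = i_{C_X}(\theta, \Omega) = 1$.

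For the second value, continuity of the seminorm supplies a constant $K > 0$ with $\absp{\vphantom{I}y} \leq K\norm{y}$ throughout $X$, so~(iii) upgrades to the honest norm bound $\norm{F(x)} \geq \delta/K > 0$ uniformly on $\partial \Omega$. This, together with the absence of eigenvalues in $(0, 1]$ on $\partial \Omega$ (which is a consequence of the standing contradiction hypothesis), puts us in position to invoke the standard cone-expansion principle of Krasnosel{\cprime}ski{\u\i}--Guo type, yielding $i_{C_X}(F, \Omega) = 0$. The clash $0 = 1$ is the desired contradiction, and the element $x_0 \in \partial \Omega$ produced by the (non-contradicted) hypothesis automatically satisfies $\absp{\vphantom{I}x_0} = m > 0$ and hence $x_0 \neq \theta$ thanks to~(i).

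The delicate step is the expansion calculation: the seminorm bound~(iii) must first be converted into a genuine \emph{norm} lower bound via continuity of $\absp{\vphantom{I}\cdot}$ before the cone-expansion lemma applies. Everything else is routine homotopy invariance, with~(ii) playing the dual role of keeping $\overline{\Omega}$ inside the domain of $F$ and of guaranteeing that $\partial \Omega$ is non-empty.
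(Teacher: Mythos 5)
This theorem is imported from \cite{BK} and the present paper gives no proof of it, so there is nothing in the text to compare your argument against; I will assess it on its own terms. Your overall strategy --- two fixed--point--index computations on the relatively open set $\Omega=\set{x\in C_X}{\absp{x}<m}$ under the contradiction hypothesis that $F$ has no positive eigenvalues on $\partial\Omega$ --- is a legitimate and standard route to eigenvalue theorems of this kind (it is essentially how Lan proves his eigenvalue results in \cite{KL1}--\cite{KL3}). Your set-up is sound: by~\ref{it:ar_2} and~\ref{it:ar_3} the set $\Omega$ is bounded with $\overline{\Omega}\subseteq C_X(\theta,r)$, its relative boundary is exactly $\set{x\in C_X}{\absp{x}=m}$ and is non-empty, and the first computation ($i_{C_X}(F,\Omega)=1$ via $H(t,x)=tF(x)$) is correct. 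Note also that the original Leggett--Williams argument (Schauder's theorem applied to $x\mapsto m F(x)/u^*(F(x))$ on the level set of a \emph{linear} functional $u^*$) is unavailable here precisely because the level set $\absp{x}=m$ of a seminorm is not convex, so an index-theoretic argument of your type is a natural replacement.

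The genuine gap is in the second computation. The ``standard cone-expansion principle of Krasnosel{\cprime}ski{\u\i}--Guo type'' that yields index $0$ requires $\norm{F(x)}\geq\norm{x}$ on $\partial\Omega$ (together with $F(x)\neq x$ there), and the bound you derive, $\norm{F(x)}\geq\delta/K$, does not give this: on $\partial\Omega$ one only knows $\norm{x}\leq Mm$, and there is no reason why $\delta/K\geq Mm$. So the expansion lemma cannot be invoked directly with the two hypotheses you list. The repair is short but must be made explicit: since the contradiction hypothesis excludes \emph{all} positive eigenvalues on $\partial\Omega$, the homotopy $H(t,x)=\bigl(1+t(N-1)\bigr)F(x)$ is admissible for every $N\geq 1$ (a fixed point on $\partial\Omega$ would produce an eigenvalue in $(0,1]$), whence $i_{C_X}(F,\Omega)=i_{C_X}(NF,\Omega)$; choosing $N$ so large that $N\delta/K>Mm\geq\sup_{x\in\overline{\Omega}}\norm{x}$ one gets $\norm{NF(x)}>\norm{x}$ on $\partial\Omega$, and \emph{then} the expansion lemma applies to $NF$, giving $i_{C_X}(NF,\Omega)=0$ and the desired contradiction $1=0$. (Equivalently, cite the combined lemma ``no eigenvalues in $(0,1]$ on the boundary plus $\inf_{\partial\Omega}\norm{F(x)}>0$ implies index $0$,'' which appears in Lan's papers; but as you have written it, with the expansion principle named as the tool, the missing rescaling step is a real gap, not a stylistic omission.) Two minor points: $x_0\neq\theta$ follows already from $\absp{x_0}=m>0$ and $\absp{\theta}=0$, without appeal to~\ref{it:ar_2}; and you should record that the index is being taken on a bounded relatively open subset of the retract $C_X$, which condition~\ref{it:ar_2} guarantees.
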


In the sequel, we will also use the following version of the well-known Krasnosel{\cprime}slki{\u\i}'s fixed-point theorem for the sum of two operators. In comparison with the original result, the nonlinear contraction has been replaced by a bounded linear operator with spectral radius less than one (for a quite general version of Krasnosel{\cprime}slki{\u\i}'s fixed-point theorem encompassing almost 30 previously known generalizations of that theorem, we refer the reader to~\cite{Park}). For completeness, let us recall that the spectral radius of a bounded linear operator $A$ is given by the formula $r(A):=\lim_{n \to \infty}\norm{A^n}^{\frac{1}{n}}$ (see, for example,~\cite{Leb}*{p.~109}).

\begin{theorem}\label{thm:krasnosielskii}
Let $M$ be a non-empty closed and convex subset of a Banach space $X$, and let $F_1\colon X \to X$, $F_2\colon M \to X$ be two mappings such that\textup:
\begin{enumerate}[label=\textup{(\roman*)}]
 \item\label{it:krasnosielski_2} $F_1$ is a bounded linear operator such that $r(F_1)<1$\textup;

 \item\label{it:krasnosielski_1} $F_2$ is compact, that is, $F_2$ is continuous and $F_2(M)$ is contained in a compact subset of $X$\textup;

 \item\label{it:krasnosielski_3} if $x=F_1(x)+F_2(y)$ for some $y \in M$, then $x\in M$. 
\end{enumerate}
Then $F_1+F_2$ has a fixed point in $M$.
\end{theorem}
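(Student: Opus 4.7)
My plan is to reduce this to the classical Schauder fixed-point theorem by inverting $I - F_1$. Since $r(F_1) < 1$, Gelfand's formula $\norm{F_1^n}^{1/n} \to r(F_1)$ yields $\norm{F_1^n} \leq C \rho^n$ for some $\rho \in (r(F_1),1)$ and some constant $C>0$; thus the Neumann series $\sum_{n=0}^{\infty} F_1^n$ converges in the operator norm. Hence $I - F_1$ is invertible, with $(I-F_1)^{-1} = \sum_{n=0}^{\infty} F_1^n$ a bounded linear operator on $X$.

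For each $y \in X$ the equation $x = F_1(x) + F_2(y)$ therefore has the unique solution $x = (I - F_1)^{-1} F_2(y)$. This lets me define $T \colon M \to X$ by $T(y) \zdef (I - F_1)^{-1} F_2(y)$. Assumption~\ref{it:krasnosielski_3} says precisely that whenever such a solution $x$ exists with $y \in M$, one has $x \in M$, so in fact $T(M) \subseteq M$.

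I then apply Schauder's fixed-point theorem to $T$ on the non-empty closed convex set $M$. Since $F_2 \colon M \to X$ is compact (by~\ref{it:krasnosielski_1}) and $(I - F_1)^{-1}$ is continuous and linear, the composition $T = (I - F_1)^{-1} \circ F_2$ is continuous, and $T(M) = (I - F_1)^{-1}(F_2(M))$ is the image of a relatively compact set under a continuous map, hence relatively compact; so $T$ is compact. Schauder's theorem produces $y_0 \in M$ with $y_0 = (I - F_1)^{-1} F_2(y_0)$, and applying $I - F_1$ to both sides gives $y_0 = F_1(y_0) + F_2(y_0)$, the desired fixed point of $F_1 + F_2$.

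The only nontrivial step here is the invertibility of $I - F_1$ and the boundedness of its inverse; once that is in hand, the rest is a routine composition-of-continuous-and-compact argument followed by Schauder. A slightly different route would be to equip $X$ with the equivalent norm $\norm{x}_{\ast} \zdef \sup_{n \geq 0} \rho^{-n} \norm{F_1^n x}$ (with $\rho$ as above), in which $F_1$ becomes a strict contraction of constant $\rho$, and then invoke the classical Krasnoselskii theorem directly; I prefer the Neumann-series formulation because it turns the problem into a single fixed-point equation for a compact map and side-steps the renorming.
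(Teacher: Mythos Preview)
Your proof is correct. The paper actually omits the proof of this theorem entirely, calling it ``standard,'' but the surrounding material (in particular Remark~\ref{rem:krasosielski} and the proof of Theorem~\ref{thm:solution}, where $\norm{(I-F_1)^{-1}}$ is estimated via the Neumann series $\sum_{n\geq 0} F_1^n$) makes clear that the authors have exactly your reduction in mind: invert $I-F_1$ using $r(F_1)<1$, rewrite the fixed-point equation as $y=(I-F_1)^{-1}F_2(y)$, and apply Schauder. So your argument matches the intended (if unwritten) proof.
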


The proof of Theorem~\ref{thm:krasnosielskii} is standard, and hence it will be omitted.

\begin{remark}\label{rem:krasosielski}
The condition~\ref{it:krasnosielski_3} of Theorem~\ref{thm:krasnosielskii}, which guarantees that a certain nonlinear operator maps the set $M$ into itself, is often the most difficult condition among the three one has to verify. However, if $M:=B_X(\theta,r)$ and the assumption~\ref{it:krasnosielski_1} of Theorem~\ref{thm:krasnosielskii} is strengthened to: `$F_2$ is continuous and $F_2(M)$ is contained in a compact subset of $B_X\bigl(\theta,r/\norm{(I-F_1)^{-1}}\bigr)$,' where $\norm{(I-F_1)^{-1}}$ denotes the  operator norm of the continuous inverse of $I-F_1$ which exists due to~\ref{it:krasnosielski_2}, then the condition~\ref{it:krasnosielski_3} of Theorem~\ref{thm:krasnosielskii} holds. Indeed, if $x=F_1(x)+F_2(y)$, then $(I-F_1)(x)=F_2(y)$, and thus $\norm{x} = \norm{(I-F_1)^{-1} \circ F_2(y)} \leq \norm{(I-F_1)^{-1}}\cdot \norm{F_2(y)} \leq r$. Hence, $x\in M$.
\end{remark}

\begin{remark}
Let us also add that a Krasnosel{\cprime}slki{\u\i}--Schaefer type result similar to Theorem~\ref{thm:krasnosielskii} can be found in~\cite{BBZ}, where the mapping $F_2$, defined on the whole Banach space $X$, is required to be completely continuous (that is, $F_2$ is continuous and maps bounded sets into relatively compact ones) rather than compact (see~\cite{BBZ}*{Theorem~4.2}). 
\end{remark}

\section{Existence results via a Leggett-Williams type theorem}
\label{sec:LW}

In this Section we are going to study the existence of continuous solutions to Hammerstein integral equations with kernels which may change sign. Let $\Omega$ be an open and bounded subset of $\mathbb R^n$ and let us consider the following Hammerstein integral equation
\begin{equation}\label{eq:int_H}
\lambda x(t)=\int_{\overline{\Omega}} k(t,s)f(s,x(s)) \textup ds,\quad t\in\overline{\Omega},
\end{equation}
where $k\colon\overline{\Omega}\times \overline{\Omega}\to \mathbb R$, $f\colon\overline{\Omega}\times \mathbb R\to \mathbb [0,+\infty)$ and $\lambda \neq 0$.

Before we proceed to the main part of this Section, let us recall a Leggett--Williams type theorem for Hammerstein integral equations with non-negative kernels.

\begin{theorem}[see~\cite{BK}*{Theorem~12}~and~\cite{LW}*{Theorem~2}]\label{thm:BK}
Let $\Omega$ be an open and bounded subset of $\mathbb R^n$, and let the functions $k\colon\overline{\Omega}\times \overline{\Omega}\to[0,+\infty)$ and $f\colon\overline{\Omega}\times [0,r]\to [0,+\infty)$ be continuous for some $r>0$. Moreover, suppose that there exist positive numbers $\delta_1,\delta_2,m$,  and a closed set $\Omega_0\subseteq\overline{\Omega}$ of positive Lebesgue measure such that\textup:
\begin{enumerate}[label=\textup{(\roman*)}, itemsep=5pt]
	\item\label{ass:BK1} $\displaystyle \int_{\Omega_0} k(t,s)\textup dt\geq \delta_1$  for each $s\in\Omega_0$\textup;
	\item\label{ass:BK2} $\displaystyle \int_{\Omega_0} k(t,s)\textup dt\geq \delta_2k(u,s)$  for each $(u,s)\in\overline{\Omega}\times \overline{\Omega}$\textup;
	\item $f(t,x)>0$ if $m\mu(\Omega_0)^{-1/p}\leq x\leq r$ and $t\in\Omega_0$, where $p \in [1,+\infty)$\textup;
	\item $0<m\leq r\delta_2\mu(\Omega_0)^{-1/q}$, where $q \in (1,+\infty]$ is such that $p^{-1}+q^{-1}=1$.
\end{enumerate}
Then there exists a positive parameter $\lambda>0$ such that the Hammerstein integral equation~\eqref{eq:int_H} admits a continuous and positive solution $x \colon \overline{\Omega} \to [0,r]$ such that
\begin{equation}\label{eq:LW_semi}
\biggl(\int_{\Omega_0}[x(t)]^p \textup dt\biggr)^{1/p}=m.
\end{equation}
\end{theorem}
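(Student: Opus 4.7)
The strategy is to apply Theorem~\ref{thm:LW2} in the Banach space $X \zdef C(\overline{\Omega})$, endowed with the cone
\[C_X \zdef \Bset{x \in C(\overline{\Omega})}{x \geq 0 \text{ on } \overline{\Omega} \text{ and } \int_{\Omega_0} x(t) \textup dt \geq \delta_2 \norm{x}_{\infty}},\]
the continuous seminorm $\absp{x} \zdef \bigl(\int_{\Omega_0} \abs{x(t)}^p \textup dt\bigr)^{1/p}$, and the Hammerstein operator $F(x)(t) \zdef \int_{\overline{\Omega}} k(t,s) f(s, x(s)) \textup ds$.

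The easy steps run as follows. Compactness of $F$ on $C_X(\theta, r)$ is the familiar Arzelà--Ascoli argument for Hammerstein operators with continuous kernel. That $F$ sends $C_X(\theta,r)$ into $C_X$ follows because $k, f \geq 0$ while Fubini combined with hypothesis~\ref{ass:BK2} gives $\int_{\Omega_0} F(x)(t) \textup dt \geq \delta_2 F(x)(u)$ for every $u \in \overline{\Omega}$. Hölder's inequality applied to the cone condition $\delta_2\norm{x}_\infty \leq \int_{\Omega_0} x(t) \textup dt$ yields $\norm{x}_\infty \leq M\absp{x}$ with $M \zdef \delta_2^{-1}\mu(\Omega_0)^{1/q}$, which is condition~\ref{it:ar_2} of Theorem~\ref{thm:LW2}. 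Condition~\ref{it:ar_3} then reduces to hypothesis~(iv) of the present theorem together with the constant witness $x \equiv m\mu(\Omega_0)^{-1/p} \in C_X(\theta,r)$; for this last point one observes that~\ref{ass:BK2} forces $\delta_2 \leq \mu(\Omega_0)$.

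The delicate step is condition~\ref{it:LW2_4}, namely producing a constant $\delta > 0$ with $\absp{F(x)} \geq \delta$ whenever $x \in C_X(\theta,r)$ satisfies $\absp{x}=m$. Jensen's inequality combined with hypothesis~\ref{ass:BK1} reduces this to finding a uniform positive lower bound on $\int_{\Omega_0} f(s,x(s)) \textup ds$ via the chain
\[\absp{F(x)} \geq \mu(\Omega_0)^{-1/q}\int_{\Omega_0} F(x)(t) \textup dt \geq \delta_1\mu(\Omega_0)^{-1/q}\int_{\Omega_0} f(s,x(s)) \textup ds,\]
and here I expect the main difficulty to lie. The plan is to argue by contradiction: set $\tau \zdef m\mu(\Omega_0)^{-1/p}$ and $A(x) \zdef \{s \in \Omega_0 : x(s) \geq \tau\}$, and note that hypothesis~(iii) together with compactness of $\Omega_0 \times [\tau, r]$ produces $\eta_0 \zdef \min\set{f(s,y)}{(s,y) \in \Omega_0\times [\tau,r]} > 0$. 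Given a putative sequence $(x_n) \subset C_X(\theta,r)$ with $\absp{x_n}=m$ and $\int_{\Omega_0} f(s,x_n(s)) \textup ds \to 0$, the inequality $\int_{\Omega_0} f(s,x_n(s)) \textup ds \geq \eta_0 \mu(A(x_n))$ forces $\mu(A(x_n))\to 0$. Combined with $\int_{\Omega_0} x_n^p \textup dt = m^p = \tau^p\mu(\Omega_0)$ and $\int_{A(x_n)} x_n^p \textup dt \to 0$, this yields $\int_{\Omega_0}(\tau^p - y_n^p) \textup dt \to 0$ for the truncations $y_n \zdef \min(x_n,\tau)$; the elementary inequality $\tau^p - y^p \geq \tau^{p-1}(\tau - y)$ for $y \in [0,\tau]$ and $p \geq 1$ then forces $y_n \to \tau$ in $L^1(\Omega_0)$. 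Along a subsequence with $\sum \mu(A(x_n)) < \infty$, a Borel--Cantelli argument eliminates the sets $A(x_n)$ almost everywhere, and a further pointwise a.e.\ subsequence for $y_n$ gives $x_{n_k} \to \tau$ almost everywhere on $\Omega_0$. The dominated convergence theorem together with the strict positivity of $f(\cdot,\tau)$ from hypothesis~(iii) then delivers $\int_{\Omega_0} f(s, x_{n_k}(s)) \textup ds \to \int_{\Omega_0} f(s,\tau) \textup ds > 0$, the contradiction that completes the argument.

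Having verified all hypotheses, Theorem~\ref{thm:LW2} produces $\lambda_0 > 0$ and $x_0 \in C_X(\theta,r) \setminus \{\theta\}$ with $F(x_0)=\lambda_0 x_0$ and $\absp{x_0}=m$, which is precisely the claimed solution.
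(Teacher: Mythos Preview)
Your proof is correct. Note, however, that the paper does not itself prove Theorem~\ref{thm:BK}; it is quoted from~\cite{BK} and~\cite{LW}. The natural point of comparison is the paper's proof of the extension Theorem~\ref{th:LW3}, and here your treatment of the crucial hypothesis~\ref{it:LW2_4} of Theorem~\ref{thm:LW2} genuinely differs. In Theorem~\ref{th:LW3} the positivity assumption~\ref{ass:A1_iv} carries a built-in slack factor $\vartheta\in(0,1)$: $f$ is bounded below on $\{\vartheta\tau\le|u|\le r\}$ rather than merely on $\{\tau\le|u|\le r\}$. This lets the paper argue directly, for each fixed $x$ with $\absp{x}=m$, that the set $\Omega_1=\{t\in\Omega_0:\vartheta\tau\le|x(t)|\le r\}$ has measure at least $m^p(1-\vartheta^p)r^{-p}>0$, yielding an explicit $\delta$. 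In Theorem~\ref{thm:BK} no such slack is available---the positivity of $f$ is assumed only from the exact threshold $\tau=m\mu(\Omega_0)^{-1/p}$ upward---so the paper's measure estimate would degenerate to $\mu(\Omega_1)\ge 0$. Your compactness/contradiction route (forcing $\mu(A(x_n))\to 0$, then $y_n\to\tau$ in $L^1$, then $x_{n_k}\to\tau$ a.e.\ via Borel--Cantelli plus an a.e.\ subsequence, and finally invoking dominated convergence and the strict positivity $f(\cdot,\tau)>0$) is exactly what is needed to close this gap; it is less explicit than the paper's argument but succeeds under the weaker hypothesis. The remaining steps---compactness of $F$, the cone inclusion via~\ref{ass:BK2} and Fubini, the H\"older bound $M=\delta_2^{-1}\mu(\Omega_0)^{1/q}$, and the constant witness together with $\delta_2\le\mu(\Omega_0)$---match the paper's treatment in Theorem~\ref{th:LW3} essentially verbatim.
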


\begin{remark}
Assume that $k \colon \overline{\Omega} \times \overline{\Omega} \to [0,+\infty)$ is continuous. Let us observe that if we define the mapping $\Phi \colon \overline{\Omega} \to [0,+\infty)$ by the formula
\begin{equation}\label{eq:phi}
 \Phi(s) = \delta^{-1}_2 \int_{\Omega_0} k(t,s) \textup dt, \qquad s \in \overline{\Omega},
\end{equation}
then the assumptions~\ref{ass:BK1} and~\ref{ass:BK2} of Theorem~\ref{thm:BK} may be restated as follows:
\begin{enumerate}[label={(\underline{\roman*})}, itemsep=5pt]
	\item\label{it:j} $\Phi(s) \geq \delta_1 \delta_2^{-1}$ for each $s\in\Omega_0$\textup;
	\item\label{it:jj} $k(u,s) \leq \Phi(s)$  for each $(u,s)\in\overline{\Omega}\times \overline{\Omega}$\textup.
\end{enumerate}

On the other hand, if there exists a mapping $\Phi \colon \overline{\Omega} \to [0,+\infty)$ which satisfies the above properties~\ref{it:j}-\ref{it:jj} (with the equality sign in~\eqref{eq:phi} replaced by the `less than or equal to' sign), then
\[
 \delta_2 k(u,s) \leq \delta_2  \Phi(s) \leq \int_{\Omega_0} k(t,s) \textup dt \quad \text{for $(u,s) \in \overline{\Omega}\times \overline{\Omega}$}
\]
and
\[
 \int_{\Omega_0} k(t,s) \textup dt \geq \delta_2 \Phi(s) \geq \delta_1 \quad \text{for $s \in \Omega_0$}.
\]
This shows that in this case the kernel $k$ satisfies the assumptions~\ref{ass:BK1} and~\ref{ass:BK2} of Theorem~\ref{thm:BK}.

We will use the above observation in the main result of this Section.
\end{remark}

Now, we will prove an extension of Theorem~\ref{thm:BK} for kernels that may change sign. Clearly, since the kernels are allowed to take negative values, one cannot expect, in general, to prove the existence of positive solutions. However, we will still be able to pinpoint the localization of the solutions by means of the supremum norm and the integral semi-norm (cf. formula~\eqref{eq:LW_semi}).

Let us assume that for some $r>0$ there exists a set $\Omega_0 \subseteq \overline{\Omega}$ with positive Lebesgue measure, together with constants $\vartheta  \in (0,1)$, $\eta_1, \eta_2,m,c \in (0,+\infty)$ and $p \in [1,+\infty)$ such that the functions $f \colon \overline{\Omega} \times [-r,r] \to [0,+\infty)$ and $k \colon \overline{\Omega} \times \overline{\Omega} \to \mathbb R$ satisfy the following conditions:
\begin{enumerate}[label=\textup{(A$_{\arabic*}$)}]
 \item\label{ass:A1} $f$ satisfies the Carath\'eodory conditions, that is,
   \begin{enumerate}[label=\textup{(\roman*)}]
	   \item for every $u \in [-r,r]$ the function $t \mapsto f(t,u)$ is Lebesgue measurable;
		 \item for a.e. $t \in \overline{\Omega}$ the function $u \mapsto f(t,u)$ is continuous;
		 \item\label{ass:A1iii} there exists a Lebesgue measurable function $g_r \colon \overline{\Omega} \to [0,+\infty)$ such that for a.e. $t \in \overline{\Omega}$ we have $f(t,u) \leq g_r(t)$ for all $u \in [-r,r]$;
	 \end{enumerate}

\item\label{ass:A1_iv} for a.e. $t \in \Omega_0$ we have $\inf \bset{f(t,u)}{\vartheta m\mu(\Omega_0)^{-1/p} \leq \abs{u} \leq r} \geq \eta_1$;	
	
\item\label{ass:A2} $k$ is Lebesgue measurable on $\overline{\Omega} \times \overline{\Omega}$, and for every $\tau \in \overline{\Omega}$ the function $s \mapsto k(\tau,s)$ is Lebesgue measurable\footnote{The fact that $s \mapsto k(\tau,s)$ is Lebesgue measurable for every $\tau \in \overline{\Omega}$, in general, does not follow from the Lebesgue measurability of $k$ on $\overline{\Omega} \times \overline{\Omega}$; the latter condition implies only that \emph{almost all} vertical sections are Lebesgue measurable (cf.~\cite{Lojasiewicz}*{Section~6.3}).} on $\overline{\Omega}$ and 
\[
\lim_{t \to \tau} \int_{\overline{\Omega}} \abs{k(t,s) - k(\tau,s)} g_r(s) \textup ds =0;
\] 

\item\label{ass:A3} there exists a Lebesgue measurable function $\Phi \colon \overline{\Omega} \to [0,+\infty)$ such that
\begin{enumerate}[label=\textup{(\roman*)}]
 \item\label{ass:A3i} $\Phi(t) \geq \eta_2$ for a.e. $t \in \Omega_0$;
 \item\label{ass:A3ii} for every $t \in \overline{\Omega}$ we have $\abs{k(t,s)} \leq \Phi(s)$  for a.e. $s \in \overline{\Omega}$;
 \item\label{ass:A3iii} $\displaystyle c \Phi(s) \leq \int_{\Omega_0} k(t,s) \textup dt$ for a.e. $s \in \overline{\Omega}$; 
\end{enumerate}

\item\label{ass:A4} $\displaystyle \int_{\overline{\Omega}} \Phi(s) g_r(s) \textup ds <+\infty$;

\item\label{ass:A5} $0<m\leq rc\mu(\Omega_0)^{-1/q}$, where $q \in (1,+\infty]$ is such that $p^{-1}+q^{-1}=1$.
\end{enumerate}

\begin{theorem}\label{th:LW3}
Suppose that the above assumptions hold. Then there exists a parameter $\lambda>0$ such that the Hammerstein integral equation~\eqref{eq:int_H} admits a continuous solution $x \colon \overline{\Omega} \to [-r,r]$ such that
\[
 \biggl(\int_{\Omega_0} \abs{x(t)}^p \textup dt\biggr)^{1/p} =m.
\] 
\end{theorem}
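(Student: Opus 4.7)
The plan is to apply Theorem~\ref{thm:LW2} in $X = C(\overline{\Omega})$, ordered by the cone $C$ of~\eqref{cone-posevg}, equipped with the continuous seminorm $\absp{\vphantom{I}x} \zdef \bigl(\int_{\Omega_0} \abs{x(t)}^p \textup dt\bigr)^{1/p}$ and the constant $M \zdef c^{-1} \mu(\Omega_0)^{1/q}$, to the Hammerstein operator $F(x)(t) \zdef \int_{\overline{\Omega}} k(t,s) f(s,x(s)) \textup ds$. A non-trivial fixed-point relation $F(x_0) = \lambda_0 x_0$ furnished by that theorem is precisely the equation~\eqref{eq:int_H} with $\lambda = \lambda_0$ and solution $x_0$.

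First I would check that $F \colon C(\theta,r) \to C$ is well-defined and compact. The integrand $(t,s) \mapsto k(t,s) f(s,x(s))$ is dominated by $\Phi(s) g_r(s) \in L^1(\overline{\Omega})$ thanks to~\ref{ass:A1}\ref{ass:A1iii}, \ref{ass:A3}\ref{ass:A3ii}, and~\ref{ass:A4}; continuity of $F(x)$ in $t$ and equi-continuity of $F\bigl(C(\theta,r)\bigr)$ then follow from the $L^1$-type continuity condition in~\ref{ass:A2}, while continuity and compactness in $x$ are standard consequences of the Carath\'eodory conditions combined with Lebesgue's dominated convergence theorem. The crucial structural step is cone invariance: by Fubini and~\ref{ass:A3}\ref{ass:A3iii},
\[
\int_{\Omega_0} F(x)(t) \textup dt = \int_{\overline{\Omega}} \biggl(\int_{\Omega_0} k(t,s) \textup dt\biggr) f(s,x(s)) \textup ds \geq c \int_{\overline{\Omega}} \Phi(s) f(s,x(s)) \textup ds,
\]
while $\abs{F(x)(t)} \leq \int_{\overline{\Omega}} \Phi(s) f(s,x(s)) \textup ds$ by~\ref{ass:A3}\ref{ass:A3ii} and $f \geq 0$; consequently $\int_{\Omega_0} F(x)(t) \textup dt \geq c \norm{F(x)}_{\infty}$, i.e., $F(x) \in C$.

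It then remains to verify the three conditions of Theorem~\ref{thm:LW2}. For~\ref{it:ar_2}, H\"older's inequality gives, for $x \in C$, $c \norm{x}_{\infty} \leq \int_{\Omega_0} x(t) \textup dt \leq \int_{\Omega_0} \abs{x(t)} \textup dt \leq \mu(\Omega_0)^{1/q} \absp{\vphantom{I}x}$. The inequality $m \leq rM^{-1}$ in~\ref{it:ar_3} is exactly~\ref{ass:A5}, and the constant function $x \equiv m\mu(\Omega_0)^{-1/p}$ has $\absp{\vphantom{I}x} = m$ and lies in $C(\theta,r)$ (the auxiliary bounds $m\mu(\Omega_0)^{-1/p} \leq r$ and $\mu(\Omega_0) \geq c$ follow from~\ref{ass:A5} together with $c \leq \mu(\Omega_0)$, which is forced by comparing~\ref{ass:A3}\ref{ass:A3ii} with~\ref{ass:A3}\ref{ass:A3iii}). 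The principal difficulty lies in the non-degeneracy condition~\ref{it:LW2_4}: \ref{ass:A1_iv} only bounds $f(\cdot, x(\cdot))$ from below where $\abs{x}$ is sufficiently large, whereas the constraint $\absp{\vphantom{I}x} = m$ controls only an $L^p$-average. I would resolve this by a Chebyshev-type argument: on the set
\[
A \zdef \bset{s \in \Omega_0}{\abs{x(s)} \geq \vartheta m \mu(\Omega_0)^{-1/p}},
\]
the complement $\Omega_0 \setminus A$ contributes at most $\vartheta^p m^p$ to $\int_{\Omega_0} \abs{x}^p \textup dt = m^p$, so $\int_A \abs{x}^p \textup dt \geq (1-\vartheta^p) m^p$ and hence $\mu(A) \geq (1-\vartheta^p)(m/r)^p$. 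Combining~\ref{ass:A1_iv} and~\ref{ass:A3}\ref{ass:A3i} on $A$ yields
\[
\int_{\Omega_0} F(x)(t) \textup dt \geq c \int_A \Phi(s) f(s,x(s)) \textup ds \geq c \eta_1 \eta_2 \mu(A) \geq c \eta_1 \eta_2 (1-\vartheta^p)(m/r)^p,
\]
and a final H\"older step gives $\absp{F(x)} \geq \mu(\Omega_0)^{-1/q} \cdot c \eta_1 \eta_2 (1-\vartheta^p)(m/r)^p \defz \delta > 0$. Theorem~\ref{thm:LW2} thus produces $\lambda_0 > 0$ and $x_0 \in C(\theta,r) \setminus \{\theta\}$ with $F(x_0) = \lambda_0 x_0$ and $\absp{\vphantom{I}x_0} = m$; since $\norm{x_0}_{\infty} \leq r$, this $x_0$ takes values in $[-r,r]$, completing the proof with $\lambda = \lambda_0$ and $x = x_0$.
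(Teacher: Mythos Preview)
Your proof is correct and follows essentially the same route as the paper's: the same cone $C$, seminorm $\absp{\vphantom{I}\cdot}$, and constant $M$, the same Fubini/\ref{ass:A3}\ref{ass:A3iii} argument for cone invariance, the same constant test function for~\ref{it:ar_3}, and the same Chebyshev-type lower bound on $\mu(A)$ feeding into~\ref{ass:A1_iv} and~\ref{ass:A3}\ref{ass:A3i} to produce $\delta$. The only cosmetic difference is that the paper writes $\Omega_1$ with the redundant upper cutoff $\abs{x(t)}\le r$ (automatic since $x\in C(\theta,r)$) where you write $A$.
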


%In the proof of Theorem~\ref{th:LW3} we will also need the following well-known result.

%\begin{proposition}[cf.~\cite{HSt}*{Theorem 13.17}]\label{thm:mus}
%Let $\Omega$ be a bounded and Lebesgue measurable subset of $\mathbb R^n$ and let $1\leq p_1 < p_2 < + \infty$. If $x \colon \Omega \to \mathbb R$ is $p_2$-th power Lebesgue integrable, then
%\[
% \biggl( \int_{\Omega} \abs{x(t)}^{p_1}\textup dt \biggr)^{1/p_1}  \leq \mu(\Omega)^{1/p_1 - 1/p_2}\biggl( \int_{\Omega} \abs{x(t)}^{p_2}\textup dt \biggr)^{1/p_2}.
%\]
%\end{proposition}

\begin{proof}
Let us set
\[
 C\zdef \bgset{x \in C(\overline{\Omega})}{ \int_{\Omega_0} x(t) \textup dt \geq c\norm{x}_{\infty}}
\]
and
\[
 \absp{x}=\biggl( \int_{\Omega_0} \abs{x(t)}^p \textup dt \biggr)^{1/p} \qquad \text{for $x \in C(\overline{\Omega})$}.
\]
Clearly, $C$ is a cone in $C(\overline{\Omega})$, and moreover, it is easy to check that the assumption~\ref{it:ar_2} of Theorem~\ref{thm:LW2} holds with $M=c^{-1}\mu(\Omega_0)^{1/q}$ (cf.~\cite{HSt}*{Theorem 13.17}).

Let $F$ be the mapping defined on $C(\theta,r)$ by
\begin{equation}\label{eq:F_LW}
 F(x)(t) = \int_{\overline{\Omega}} k(t,s) f(s,x(s)) \textup ds, \qquad t \in \overline{\Omega}.
\end{equation}
It can be shown that $F$ is a compact mapping from $C(\theta,r)$ into $C(\overline{\Omega})$ (cf.~\cite{M}*{Proposition~3.1, p.~164}). Furthermore, if $x \in C(\theta,r)$, then given any $u \in \overline{\Omega}$ we have
\begin{align*}
 c\abs{F(x)(u)} & \leq \int_{\overline{\Omega}} c\abs{k(u,s)}f(s,x(s)) \textup ds  \leq \int_{\overline{\Omega}}c\Phi(s)f(s,x(s)) \textup ds\\[2mm]
 & \leq \int_{\overline{\Omega}} \biggl( \int_{\Omega_0} k(t,s) \textup dt \biggr) f(s,x(s)) \textup ds\\[2mm]
 & = \int_{\Omega_0}\biggl( \int_{\overline{\Omega}} k(t,s) f(s,x(s)) \textup ds\biggr) \textup dt = \int_{\Omega_0} F(x)(t) \textup dt,
\end{align*}
which shows that $F(C(\theta,r)) \subseteq C$.

Observe that from the assumptions imposed on $k$ it follows that $c \leq \mu(\Omega_0)$, and hence if we define $x(t)=m\mu(\Omega_0)^{-1/p}$ for $t \in \overline{\Omega}$, we see that
\[
 \abs{x(t)} = m\mu(\Omega_0)^{-1/p} \leq cr\mu(\Omega_0)^{-1} \leq r \qquad \text{and} \qquad c\norm{x}_{\infty} = cm\mu(\Omega_0)^{-1/p} \leq \int_{\Omega_0} x(t)\textup dt.
\]
Thus we have $x \in C(\theta,r)$ and $\absp{x}=m$. 

Finally, we shall show that the assumption~\ref{it:LW2_4} of Theorem~\ref{thm:LW2} holds. Assume that $x \in C(\theta,r)$ is such that $\absp{x}=m$. Let us set
\[
 \Omega_1 = \bset{t \in \Omega_0}{\vartheta m \mu(\Omega_0)^{-1/p} \leq \abs{x(t)} \leq r}.
\]
Then we have
\begin{align*}
 m^p &= \int_{\Omega_0} \abs{x(t)}^p \textup dt = \int_{\Omega_1} \abs{x(t)}^p \textup dt +  \int_{\Omega_0\setminus \Omega_1} \abs{x(t)}^p \textup dt \leq r^p\mu(\Omega_1) + \dfrac{\vartheta^p m^p}{\mu(\Omega_0)} \cdot \mu(\Omega_0 \setminus \Omega_1)\\[2mm]
& \leq r^p \mu(\Omega_1) + \vartheta^p m^p.
\end{align*}
Thus we obtain $\mu(\Omega_1) \geq m^p(1-\vartheta^p)r^{-p}>0$. Hence we have
\begin{align*}
 \absp{F(x)} &= \biggl( \int_{\Omega_0} \abs{F(x)(t)}^p \textup dt\biggr)^{1/p} \geq \mu(\Omega_0)^{-1/q}  \int_{\Omega_0} \abs{F(x)(t)}  \textup dt \geq \mu(\Omega_0)^{-1/q}  \int_{\Omega_0} F(x)(t)  \textup dt\\[2mm]
& = \mu(\Omega_0)^{-1/q} \int_{\overline{\Omega}} \biggl( \int_{\Omega_0} k(t,s) \textup dt \biggr) f(s,x(s)) \textup ds \geq c \mu(\Omega_0)^{-1/q} \int_{\Omega_0}\Phi(s)f(s,x(s)) \textup ds\\[2mm]
& \geq c\eta_1 \mu(\Omega_0)^{-1/q} \int_{\Omega_1}\Phi(s)\textup ds \geq c \eta_1 \eta_2 \mu(\Omega_0)^{-1/q}m^p(1-\vartheta^p)r^{-p}\defz \delta>0.
\end{align*}
To end the proof it suffices to apply Theorem~\ref{thm:LW2}.
\end{proof}

\begin{remark}\label{rem:LW2}
Let us note that in the simplest, yet very common, case when the functions $f \colon \overline{\Omega} \times [-r,r] \to [0,+\infty)$ and $k \colon \overline{\Omega} \times \overline{\Omega} \to \mathbb R$ are continuous, the assumptions~\ref{ass:A1}--\ref{ass:A5} are implied by the following set of conditions:

`There exists a closed set $\Omega_0 \subseteq \overline{\Omega}$ with positive Lebesgue measure, together with constants $m \in (0,+\infty)$ and $p \in [1,+\infty)$ such that:
\begin{enumerate}[label=\textup{(B$_{\arabic*}$)}]
 \item $f(t,u)>0$ for $t \in \Omega_0$ and $m\mu(\Omega_0)^{-1/p}\leq \abs{u}\leq r$;\\
 
 \item $\displaystyle \min_{s \in \overline{\Omega}} \int_{\Omega_0} k(t,s)\textup dt >0$;

 \item $\displaystyle 0<m \leq r \mu(\Omega_0)^{-1/q} \norm{k}_{\infty}^{-1} \min_{s \in \overline{\Omega}} \int_{\Omega_0} k(t,s)\textup dt$.'
\end{enumerate}
Indeed, \ref{ass:A1}--\ref{ass:A2} and \ref{ass:A5} trivially hold; \ref{ass:A1}~\ref{ass:A1iii} and~\ref{ass:A2} with $g_r(t)=\norm{f}_{\infty}$ for $t \in \overline{\Omega}$, whereas $\ref{ass:A5}$ with $c:=\norm{k}_{\infty}^{-1} \min_{s \in \overline{\Omega}} \int_{\Omega_0} k(t,s)\textup dt$. To see that the assumptions~\ref{ass:A3}--\ref{ass:A4} also hold it suffices to set
\[
 \Phi(s)=\frac{\norm{k}_{\infty}  \int_{\Omega_0} k(t,s)\textup dt}{\min_{s \in \overline{\Omega}} \int_{\Omega_0} k(t,s)\textup dt}\quad \text{for $s \in \overline{\Omega}$}.
\]
\end{remark}

\subsection{Periodic BVPs}\label{sec:BVP_LW} 
In this short Section, we would like to show how to apply Leggett--Williams type theorems to proving the existence of solutions to BVPs. Therefore, let us consider the following periodic BVP
\begin{gather}
x''(t)+\omega^2 x(t)=\lambda f(t,x(t)), \quad t \in [0,1], \label{eq:BVP1}\\
x(0)=x(1), \quad x'(0)=x'(1),\label{eq:BVP2}
\end{gather}
where $\lambda \neq 0$ and $\omega$ is a positive constant such that $\omega \neq 2n\pi$ for $n \geq 1$. For simplicity, we assume that $f \colon [0,1] \times \mathbb R \to \mathbb [0,+\infty)$ is continuous. It can be checked that each continuous solution of the following nonlinear Hammerstein integral equation
\begin{equation}\label{eq:BVP_H}
 x(t) = \lambda \int_0^1 k(t,s)f(s,x(s))\textup ds, \quad t \in [0,1],
\end{equation}
where
\begin{equation}\label{eq:green_c}
 k(t,s) = \begin{cases}
           \dfrac{\cos[\omega(\tfrac{1}{2}-t+s)]}{2\omega \sin(\tfrac{1}{2}\omega)}, \quad \text{if $0\leq s \leq t \leq 1$},\\[5mm]
					 \dfrac{\cos[\omega(\tfrac{1}{2}-s+t)]}{2\omega \sin(\tfrac{1}{2}\omega)}, \quad \text{if $0\leq t < s \leq 1$},
					\end{cases}
\end{equation}
is a solution\footnote{Let us recall that by a solution to the BVP~\eqref{eq:BVP1}--\eqref{eq:BVP2} we understand a real-valued twice continuously differentiable function defined on $[0,1]$ which satisfies both the equation~\eqref{eq:BVP1} and the periodic BCs~\eqref{eq:BVP2}.} to the BVP~\eqref{eq:BVP1}--\eqref{eq:BVP2}. 

Furthermore, let us note that if, for example, $\omega=\frac{3}{2}\pi$, then $k(t,t)=-\frac{1}{3\pi}<0$ for $t \in [0,1]$. This means that to study the existence of solutions of the BVP~\eqref{eq:BVP1}--\eqref{eq:BVP2} with $\omega=\frac{3}{2}\pi$ we cannot apply results for mappings in cones which require $k$ to be non-negative on a rectangle of the form $[a,b]\times [0,1]$ such as, for example, Theorem~\ref{thm:BK} or the results in~\cites{W2}.

\begin{theorem}\label{thm:existence_LW}
Let $\omega$ be a positive constant such that $\omega \neq 2n\pi$ for $n\geq 1$. Moreover, let $r \geq 1$ and let the continuous function $f\colon [0,1]\times [-r,r] \to \mathbb [0,+\infty)$ be such that $f(t,u)>0$ if $0 \leq t \leq 1$ and $2\omega^{-1}\abs{\sin(\frac{1}{2}\omega)} \leq \abs{u} \leq r$. Then for every $p \in [1,+\infty)$ there exists a positive parameter $\lambda:=\lambda(\omega, p)$ such that the BVP~\eqref{eq:BVP1}--\eqref{eq:BVP2} has a solution $x\colon [0,1] \to [-r,r]$ with $\bigl(\int_0^1 \abs{x(t)}^p \textup dt\bigr)^{1/p}=2\omega^{-1}\abs{\sin(\frac{1}{2}\omega)}$.
\end{theorem}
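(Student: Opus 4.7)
The plan is to recast the BVP~\eqref{eq:BVP1}--\eqref{eq:BVP2} as the Hammerstein integral equation~\eqref{eq:BVP_H} with the Green's function kernel~\eqref{eq:green_c} (this equivalence is already noted in the text) and then to apply Theorem~\ref{th:LW3}. Since both $f$ and $k$ are continuous, I would invoke the simplified sufficient set of conditions described in Remark~\ref{rem:LW2}, with the choices
\[
\Omega=(0,1),\qquad \Omega_0=[0,1],\qquad \mu(\Omega_0)=1, \qquad m=2\omega^{-1}\abs{\sin(\tfrac{1}{2}\omega)}.
\]
Then $\mu(\Omega_0)^{-1/p}=\mu(\Omega_0)^{-1/q}=1$ for every admissible pair $(p,q)$, so the integral semi-norm condition in the conclusion of Theorem~\ref{th:LW3} becomes exactly the one stated in the theorem.

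The heart of the matter is the explicit evaluation of $\int_0^1 k(t,s)\,dt$ and $\|k\|_\infty$. For fixed $s\in[0,1]$ I would split the integral at $t=s$ and, using the substitution $w=\tfrac12-|t-s|$ together with the identity $\sin(\omega(1/2-s))=-\sin(\omega(s-1/2))$, show that
\[
\int_0^1 k(t,s)\,\textup dt=\frac{1}{2\omega\sin(\tfrac12\omega)}\cdot\frac{2\sin(\tfrac12\omega)}{\omega}=\frac{1}{\omega^{2}},
\]
so this quantity is constant in $s$ and strictly positive, verifying condition~(B$_2$). Since the argument $\omega(\tfrac12-|t-s|)$ ranges over $[-\omega/2,\omega/2]$ and attains the value $0$ at $|t-s|=\tfrac12$, the cosine in the numerator of $k$ reaches $1$ in modulus, hence $\|k\|_\infty=\bigl(2\omega\abs{\sin(\tfrac12\omega)}\bigr)^{-1}$.

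With these two values in hand I would set, as in Remark~\ref{rem:LW2},
\[
c\zdef \|k\|_\infty^{-1}\min_{s\in[0,1]}\int_0^1 k(t,s)\,\textup ds=\frac{2\abs{\sin(\tfrac12\omega)}}{\omega}=m.
\]
Now condition~(B$_3$) reduces to $m\le r\mu(\Omega_0)^{-1/q}\,c=rc$, i.e.\ to $r\ge 1$, which is assumed; the positivity hypothesis on $f$ in the statement is precisely condition~(B$_1$) because $m\mu(\Omega_0)^{-1/p}=m$. Thus all the hypotheses of Theorem~\ref{th:LW3} are met, and it yields $\lambda=\lambda(\omega,p)>0$ and a continuous $x\colon[0,1]\to[-r,r]$ solving~\eqref{eq:BVP_H} with $\bigl(\int_0^1\abs{x(t)}^p\,dt\bigr)^{1/p}=m$. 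Standard regularity arguments for Green-function convolutions give $x\in C^2[0,1]$ and yield that $x$ satisfies \eqref{eq:BVP1}--\eqref{eq:BVP2}.

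I expect the main obstacle to be purely computational: handling the sign of $\sin(\tfrac12\omega)$ carefully in the integral of $k$ so that the identity $\int_0^1 k(t,\cdot)\,dt\equiv\omega^{-2}$ is valid for every admissible $\omega$ (in particular for $\omega\in(2n\pi,(2n+1)\pi)$ when $\sin(\tfrac12\omega)$ may be negative and $k$ genuinely changes sign). Once this integral is seen to be a positive constant in $s$, the mild hypothesis $r\ge 1$ exactly matches the balance condition~(B$_3$), and no further estimates are needed.
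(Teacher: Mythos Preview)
Your proposal is correct and follows exactly the same route as the paper's proof: apply Theorem~\ref{th:LW3} via the simplified conditions of Remark~\ref{rem:LW2} with $\overline{\Omega}=\Omega_0=[0,1]$ and $m=2\omega^{-1}\abs{\sin(\tfrac12\omega)}$, the key input being the identity $\int_0^1 k(t,s)\,\textup dt=\omega^{-2}$ for all $s$. Your version simply spells out the computations (the integral of $k$, the value of $\|k\|_\infty$, and the reduction of (B$_3$) to $r\ge 1$) that the paper compresses into a single sentence; the anticipated sign ``obstacle'' is not one, since the factor $\sin(\tfrac12\omega)$ cancels in the evaluation of $\int_0^1 k(t,s)\,\textup dt$ regardless of its sign.
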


\begin{proof}
Since $\int_0^1 k(t,s)\textup dt = \omega^{-2}$ for every $s \in [0,1]$, the proof of Theorem~\ref{thm:existence_LW} is a direct consequence of Theorem~\ref{th:LW3} if one sets: $\overline{\Omega}=\Omega_0=[0,1]$ and $m=2\omega^{-1}\abs{\sin(\frac{1}{2}\omega)}$ (cf. Remark~\ref{rem:LW2}).
\end{proof}

\begin{remark}\label{rem:inc}
The existence of solutions to the BVP~\eqref{eq:BVP1}--\eqref{eq:BVP2} was also studied in, for example,~\cites{W2, G} (more general periodic BVPs were investigated in~\cite{Ma}), where under some growth conditions on the function $f$, it was shown that the BVP~\eqref{eq:BVP1}--\eqref{eq:BVP2} admits a positive solution, provided $\lambda=1$ and $\omega \in (0,\pi]$.

Let us note that for such values of $\omega$ also our result ensures the existence of positive solutions to the BVP~\eqref{eq:BVP1}--\eqref{eq:BVP2} (this follows from the fact that $k(t,s) \geq 0$ for $(t,s) \in [0,1] \times [0,1]$, if $\omega \in (0,\pi]$).

However, our result and the results obtained in~\cites{W2, G} are in a sense `incomparable' (even in the case $\omega \in (0,\pi]$), since on the one hand our assumptions imposed on the function $f$ are less restrictive than those introduced in the above-mentioned articles  (for example, we do not require $f$ to satisfy certain growth conditions at zero and at infinity), but on the other hand we are able to prove the existence of solutions to the BVP~\eqref{eq:BVP1}--\eqref{eq:BVP2} for some positive parameter $\lambda$, whereas in~\cites{W2, G} $\lambda=1$.

What is worth mentioning is that the fact that Theorem~\ref{thm:existence_LW} ensures the existence of solutions to the BVP~\eqref{eq:BVP1}--\eqref{eq:BVP2} only for some $\lambda>0$ is something that cannot be avoided and is not a consequence of the approach, but is, one might say, `forced' by the additional constraints imposed on the sought solution and the problem itself. To better understand this phenomenon let us consider the BVP~\eqref{eq:BVP1}--\eqref{eq:BVP2} with $f(t,u)=u^2$, $\omega=\pi$ and $\lambda=1$. It is easy to see that the integral operator $F$ corresponding to the considered BVP in the integral form~\eqref{eq:BVP_H} (cf. formula~\eqref{eq:F_LW}) is a contraction with respect to the supremum norm (with Lipschitz constant $r/\pi$) on the closed ball $B_{C[0,1]}(\theta,r)$, where $r \in (0,\pi)$, and therefore it has at most one fixed point in this ball. However, we know that $F(\theta)=\theta$, which means that there are no solutions to the BVP in question with supremum norm not exceeding $r$ other than the zero solution.
\end{remark}

\section{Existence results via Krasnosel{\cprime}slki{\u\i}'s theorem}
\label{sec:Kras}

The aim of this Section is to prove the existence of $CBV$-solutions to the following perturbed nonlinear Hammerstein integral equation
\begin{equation}\label{eq:bvp_hammerstein}
 x(t)=\alpha[x]v(t) + \beta[x]w(t) + \lambda \int_0^1 k(t,s)f(s,x(s))\textup ds, \quad t \in [0,1],
\end{equation}
where $\lambda \in \mathbb R$, using Krasnosel{\cprime}slki{\u\i}'s fixed-point theorem. As in Section~\ref{sec:LW}, we are also going to provide applications of our result to some BVPs.

Before we proceed further, let us make the following assumptions on the functionals $\alpha, \beta \in CBV^*[0,1]$ and the functions $v,w \in CBV[0,1]$:
\begin{enumerate}[label=\textup{(A$_{\arabic*}$)}]
\setcounter{enumi}{6}
 \item\label{a1} $\alpha[e]=\beta[e]=0$; here $e$ denotes the constant function given by $e(t)=1$ for $t\in [0,1]$; 
 
 \item\label{a3} $\babs{\alpha[v]-\beta[v]}<1$;

 \item\label{a2} $v(t)+w(t)=1$ for every $t \in [0,1]$.
\end{enumerate}

\noindent Furthermore, let us assume that the nonlinearity $f \colon [0,1] \times \mathbb R \to \mathbb R$ and the kernel $k \colon [0,1] \times [0,1] \to \mathbb R$ satisfy the following conditions:
\begin{enumerate}[label=\textup{(A$_{\arabic*}$)}, resume]
\item\label{a4} $f$ satisfies the Carath\'eodory conditions, that is,
   \begin{enumerate}[label=\textup{(\roman*)}]
 \item for every $u \in \mathbb R$ the function $t \mapsto f(t,u)$ is Lebesgue measurable;
 \item for a.e. $t \in [0,1]$ the function $u \mapsto f(t,u)$ is continuous;
 \item\label{a4c} there exist a non-decreasing function  $\psi \colon [0,+\infty) \to [0,+\infty)$ and an $L^p$-function $\phi \colon [0,1] \to [0,+\infty)$  with $p \in (1,+\infty]$ such that $\abs{f(t,u)} \leq \phi(t)\psi(\abs{u})$ for $t \in [0,1]$ and $u \in \mathbb R$;
\end{enumerate}

\item\label{a4d}  $\lim_{r \to +\infty} \psi(r)/r=0$;

\item\label{a5} for every $t \in [0,1]$ the function $s \mapsto k(t,s)$ is an $L^q$-function; here $q \in [1,+\infty)$ is such that $p^{-1}+q^{-1}=1$;
 
\item\label{a5a} there exists an $L^q$-function $m \colon [0,1] \to [0,+\infty)$ such that $\var_{[0,1]} k(\cdot,s) \leq m(s)$ for a.e. $s \in [0,1]$;

\item\label{a5d}  for every $\tau \in [0,1]$ we have
\[
 \lim_{t \to \tau} \int_0^1 \abs{k(t,s)-k(\tau,s)}\phi(s) \textup ds=0.
\]
\end{enumerate} 

\begin{remark}
If the kernel $k$ satisfies~\ref{a5a}, then in~\ref{a5} we do not need to assume that the function $s \mapsto k(t,s)$ is an $L^q$-function for every $t \in [0,1]$; it is enough to assume that all the vertical sections are Lebesgue measurable and only one of them is integrable with $q$th power (cf.~\cite{BGK}*{Remark~7} or~\cite{Bug}*{Remark~7.3}).
\end{remark}

\begin{remark}\label{rem:k}
Let us note that the assumptions~\ref{a5}--\ref{a5d} hold if, for example, the kernel $k \colon [0,1] \times [0,1] \to \mathbb R$ is continuous and satisfies the following condition:
\begin{enumerate}[label=\textup{(B$_{\arabic*}$)}]
\setcounter{enumi}{3}
 \item\label{it:B_kern} there exists an $L^q$-function $m \colon [0,1] \to [0,+\infty)$ such that $\abs{k(t,s)-k(\tau,s)} \leq m(s)\abs{t-\tau}$ for all $s,t,\tau \in [0,1]$.
\end{enumerate}
\end{remark}

We set
\begin{equation}\label{eq:f_1}
 F_1(x)(t)=\alpha[x]v(t) + \beta[x]w(t), \quad t \in [0,1]
\end{equation}
and
\begin{equation}\label{eq:f_2}
F_2(x)(t)=\int_0^1 k(t,s)f(s,x(s)) \textup ds, \quad t \in [0,1],
\end{equation}
so that~\eqref{eq:bvp_hammerstein} with $\lambda=1$ takes the following operator form
\[
 x = F_1(x) + F_2(x).
\] 
In order to show that the set of fixed points of $F_1+F_2$ is non-empty (which would obviously imply that the Hammerstein integral equation~\eqref{eq:bvp_hammerstein} with $\lambda=1$ has a solution) we are going to apply Theorem~\ref{thm:krasnosielskii}, and therefore we begin with proving that the operators $F_1$ and $F_2$ have the required properties.

\begin{lemma}\label{lem:F_1_properties}
Suppose that the assumptions~\ref{a1} and~\ref{a2} hold. Then $F_1 \colon CBV[0,1] \to CBV[0,1]$ given by~\eqref{eq:f_1} is a bounded linear operator with 
\[
\norm{F_1}\leq \norm{\alpha}+\norm{\alpha-\beta}\cdot\norm{w}_{BV}
\]
and
\begin{equation}\label{eq:estimate_F1n}
 \norm{F_1^{n+2}}\leq \bnorm{\alpha-\beta}\cdot \babs{\alpha[v]-\beta[v]}^{n} \cdot \bnorm{\alpha[v]v+\beta[v]w}_{BV} \quad \text{for $n \geq 0$}.\footnote{If $n=0$, then, by definition, we take $\babs{\alpha[v]-\beta[v]}^{n}=1$, even if $\babs{\alpha[v]-\beta[v]}=0$.}
\end{equation}
In particular, $r(F_1)\leq \babs{\alpha[v]-\beta[v]}$.
\end{lemma}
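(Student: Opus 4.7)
The plan is to exploit the constraints \ref{a1} and \ref{a2} to show that after two applications the iterate $F_1^2$ has rank at most one, with image spanned by a distinguished eigenvector of $F_1$; everything then reduces to tracking a single scalar. Linearity of $F_1$ is immediate from the linearity of $\alpha,\beta$, and $F_1$ maps $CBV[0,1]$ into itself since $v,w\in CBV[0,1]$. To get the boundedness estimate, I would use \ref{a2} to write $v=e-w$ (with $e(t)\equiv 1$) and rewrite
\begin{equation*}
F_1(x)=\alpha[x]e+\bigl(\beta[x]-\alpha[x]\bigr)w;
\end{equation*}
since $\norm{e}_{BV}=1$, this immediately yields $\norm{F_1(x)}_{BV}\leq \bigl(\norm{\alpha}+\norm{\alpha-\beta}\cdot\norm{w}_{BV}\bigr)\norm{x}_{BV}$, which is the required bound on $\norm{F_1}$.

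The key algebraic step is to pin down $F_1^2$. From \ref{a1} and \ref{a2} one has $\alpha[w]=\alpha[e-v]=-\alpha[v]$ and, analogously, $\beta[w]=-\beta[v]$. Plugging these into $\alpha[F_1(x)]=\alpha[x]\alpha[v]+\beta[x]\alpha[w]$ and $\beta[F_1(x)]=\alpha[x]\beta[v]+\beta[x]\beta[w]$ produces
\begin{equation*}
F_1^2(x)=(\alpha-\beta)[x]\cdot g,\qquad g\zdef \alpha[v]v+\beta[v]w.
\end{equation*}
The very same computation applied to $g$ itself (which lies in $\operatorname{span}\{v,w\}$) shows that $g$ is an eigenvector of $F_1$: namely, $F_1(g)=(\alpha[v]-\beta[v])g$. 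This is the lemma's one genuine observation; once it is in hand, the rest is routine.

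A trivial induction then gives $F_1^{n+2}(x)=(\alpha-\beta)[x]\cdot(\alpha[v]-\beta[v])^n\cdot g$ for every $n\geq 0$, from which \eqref{eq:estimate_F1n} follows by taking $BV$-norms and using $\babs{(\alpha-\beta)[x]}\leq \norm{\alpha-\beta}\norm{x}_{BV}$. The bound on the spectral radius is then immediate from Gelfand's formula applied along the subsequence $(n+2)_{n\geq 0}$: the finite factor $\norm{\alpha-\beta}\cdot\norm{g}_{BV}$ is raised to the vanishing power $1/(n+2)$, leaving only $\babs{\alpha[v]-\beta[v]}^{n/(n+2)}\to \babs{\alpha[v]-\beta[v]}$. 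No step is a serious obstacle; the only point that requires care is ensuring the eigenvector identification uses \emph{both} \ref{a1} and \ref{a2} (they are used precisely to turn $\alpha[w]$ and $\beta[w]$ into $-\alpha[v]$ and $-\beta[v]$), which in turn is what makes the spectral-radius estimate sharp enough to combine cleanly with assumption \ref{a3} in the sequel.
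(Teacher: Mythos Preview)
Your argument is correct and is precisely the ``straightforward (although tedious)'' computation the paper alludes to but omits: rewriting $F_1(x)=\alpha[x]e+(\beta-\alpha)[x]w$ via \ref{a2} gives the norm bound, and the identities $\alpha[w]=-\alpha[v]$, $\beta[w]=-\beta[v]$ obtained from \ref{a1}--\ref{a2} collapse $F_1^2$ to the rank-one map $x\mapsto(\alpha-\beta)[x]\,g$ with $g=\alpha[v]v+\beta[v]w$ an eigenvector for the eigenvalue $\alpha[v]-\beta[v]$, from which \eqref{eq:estimate_F1n} and the spectral-radius bound follow at once. There is nothing to compare, since the paper gives no alternative route.
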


\begin{proof}
The proof of Lemma~\ref{lem:F_1_properties} is straightforward (although tedious) and hence it will be omitted.
\end{proof}

\begin{remark}\label{rem:properties_F1}
Let us note that in order to prove the estimate for $\norm{F_1}$ one does not have to use the assumption~\ref{a1}. This assumption is used in showing~\eqref{eq:estimate_F1n}.
\end{remark}

\begin{lemma}\label{lem:F_2_properties}
If the assumptions~\ref{a4} and~\ref{a5}--\ref{a5d} hold, then the mapping $F_2 \colon BV[0,1] \to CBV[0,1]$ given by~\eqref{eq:f_2} is completely continuous.
\end{lemma}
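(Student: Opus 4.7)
The plan is to establish three facts: (i) $F_2$ really does range in $CBV[0,1]$; (ii) $F_2$ is continuous; (iii) $F_2$ maps bounded sets of $BV[0,1]$ into relatively compact subsets of $CBV[0,1]$. Everything will run off a single partition-level estimate: for any partition $0 = t_0 < \cdots < t_n = 1$ of $[0,1]$ and any $x,y \in BV[0,1]$, interchanging the sum with the integral and using~\ref{a5a} gives
\[
 \sum_{i=1}^n \babs{(F_2(x)-F_2(y))(t_i) - (F_2(x)-F_2(y))(t_{i-1})} \leq \int_0^1 m(s)\abs{f(s,x(s)) - f(s,y(s))}\textup ds.
\]

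First, for~(i), I would fix $x \in BV[0,1]$ and set $R := \norm{x}_{BV} \geq \norm{x}_{\infty}$. Continuity of $t \mapsto F_2(x)(t)$ is immediate from~\ref{a5d} combined with the Carath\'eodory bound $\abs{f(s,x(s))} \leq \phi(s)\psi(R)$ coming from~\ref{a4}. Taking $y \equiv 0$ in the displayed estimate (absorbing $\abs{f(s,0)}$ into the growth bound) and applying H\"older's inequality with exponents $p,q$ delivers $\var F_2(x) \leq \psi(R)\norm{m}_q\norm{\phi}_p < +\infty$, so $F_2(x) \in CBV[0,1]$.

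Next, for~(ii), if $x_n \to x$ in $BV[0,1]$ then $\norm{\cdot}_\infty \leq \norm{\cdot}_{BV}$ yields uniform convergence and a common sup-norm bound $R$. By~\ref{a4} one has $f(s,x_n(s)) \to f(s,x(s))$ for a.e.\ $s$, with dominating function $2\phi(s)\psi(R)$. The displayed estimate bounds $\var(F_2(x_n) - F_2(x))$ by $\int_0^1 m(s)\abs{f(s,x_n(s)) - f(s,x(s))}\textup ds$, and a parallel computation at $t=0$ using $\abs{k(0,\cdot)} \in L^q$ from~\ref{a5} bounds $\abs{F_2(x_n)(0) - F_2(x)(0)}$. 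In both cases H\"older produces an $L^1$-dominating function and the dominated convergence theorem drives the right-hand sides to zero, so $\norm{F_2(x_n) - F_2(x)}_{BV} \to 0$.

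The main obstacle lies in~(iii), but the trick is clean once seen: given a bounded sequence $(x_n) \subseteq BV[0,1]$, Helly's selection theorem produces a subsequence $(x_{n_k})$ converging pointwise on $[0,1]$ to some $\bar x \in BV[0,1]$, and the exact argument of~(ii) applied to $F_2(x_{n_k}) - F_2(x_{n_l})$ shows that this sequence is Cauchy in $BV[0,1]$. Completeness of $BV[0,1]$ provides a limit, and closedness of $CBV[0,1]$ inside $BV[0,1]$ keeps the limit continuous. The conceptual point worth isolating is that~\ref{a5a} coupled with the $L^p$-dominating function for $f$ from~\ref{a4} produces, via H\"older, exactly the $L^1$-majorant needed to make the dominated convergence argument close in both~(ii) and~(iii); once this is noticed, everything else is bookkeeping.
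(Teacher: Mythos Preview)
Your proposal is correct and follows essentially the same route as the paper: the same partition-level estimate via~\ref{a5a}, the same dominated convergence argument for continuity, and Helly's selection theorem for relative compactness. The only cosmetic difference is that in step~(iii) the paper identifies the limit directly as $F_2(\bar x)$ rather than passing through a Cauchy argument, which is slightly more efficient but not materially different.
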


\begin{proof}
First, we shall show that for every $x \in BV[0,1]$ the function $F_2(x)$ is well-defined. So let us fix $x \in BV[0,1]$. Then for every $s \in [0,1]$ we have $\abs{f(s,x(s))} \leq \phi(s) \psi(\norm{x}_{BV})$. Therefore, the function $s \mapsto k(t,s)f(s,x(s))$ is Lebesgue integrable for every $t \in [0,1]$; Lebesgue measurability of the above function follows from the Carath\'eodory conditions imposed on $f$ and the fact that $x$ is Lebesgue measurable (cf.~\cite{ABM}*{Theorem~1.5}).

Now, we are going to prove that $F_2$ maps the space $BV[0,1]$ into $CBV[0,1]$. Fix $x \in BV[0,1]$. If $0=t_0 < \ldots < t_n=1$ is an arbitrary finite partition of the interval $[0,1]$, then, in view of the assumption~\ref{a5a}, we infer that
\begin{equation}\label{eq:k_m}
 \sum_{i=1}^n \abs{k(t_i,s) - k(t_{i-1},s)} \leq m(s) \qquad \text{for a.e. $s\in [0,1]$},
\end{equation}
and so
\begin{align*}
 \sum_{i=1}^n \abs{F_2(x)(t_i)-F_2(x)(t_{i-1})} & \leq \int_0^1 \sum_{i=1}^n \abs{k(t_i,s)-k(t_{i-1},s)}\phi(s)\psi(\norm{x}_{BV})\textup ds \\
& \leq \psi(\norm{x}_{BV})\int_0^1 m(s)\phi(s)\textup ds.
\end{align*}
Thus $\var_{[0,1]} F_2(x) \leq \psi(\norm{x}_{BV})\int_0^1 m(s)\phi(s)\textup ds$. This shows that $F_2(x) \in BV[0,1]$. Continuity of the function $F_2(x)$ is a simple consequence of the assumption~\ref{a5d}. Hence, $F_2(x) \in CBV[0,1]$, which proves our claim.

Finally, we will show that $F_2$ is completely continuous. Suppose that $(x_n)_{n \in \mathbb N}$ is an arbitrary sequence of $BV$-functions which is convergent to some $x \in BV[0,1]$ with respect to the $BV$-norm\footnote{Let us recall that if a sequence of $BV$-functions is convergent to a function $x\in BV[0,1]$ with respect to the $BV$-norm, then it is uniformly convergent to $x$ on the whole interval $[0,1]$.}. In particular, $(x_n)_{n \in \mathbb N}$ is bounded, which means that $\sup_{n \in \mathbb N} \norm{x_n}_{BV} \leq R_1$ and $\norm{x}_{BV}\leq R_1$ for some $R_1>0$. Using the fact that the estimate~\eqref{eq:k_m} is true for an arbitrary finite partition of the interval $[0,1]$, we see that
\[
 \norm{F_2(x_n)-F_2(x)}_{BV} \leq \int_0^1 \bigl(m(s)+\abs{k(0,s)}\bigr)\abs{f(s,x_n(s))-f(s,x(s))}\textup ds \qquad \text{for $n \in \mathbb N$}.
\]
Since all the integrands on the right-hand side of the above formula can be majorized by the Lebesgue integrable function $s \mapsto 2\bigl(m(s)+\abs{k(0,s)}\bigr)\phi(s)\psi(R_1)$, which, clearly, does not depend on $n$, by the Lebesgue dominated convergence theorem, we infer that $\norm{F_2(x_n)-F_2(x)}_{BV}\to 0$ as $n \to +\infty$. This shows that $F_2$ is continuous. 

To prove that $F_2$ maps bounded subsets of $BV[0,1]$ into relatively compact subsets of $CBV[0,1]$, we will use similar techniques to those used in~\cite{BGK}. Fix $R_2>0$ and let $(x_n)_{n \in \mathbb N}$ be an arbitrary sequence of $BV$-functions such that $\norm{x_n}_{BV} \leq R_2$ for $n \in \mathbb N$. By Helly's selection theorem there exists a subsequence $(x_{n_k})_{k \in \mathbb N}$ of $(x_n)_{n \in \mathbb N}$ and a function $x \in BV[0,1]$ such that $x_{n_k} \to x$ pointwise on $[0,1]$ and $\norm{x}_{BV}\leq R_2$ (see~\cite{ABM}*{Theorem~1.11} or~\cite{Lojasiewicz}*{1.4.5}). From the first part of the proof it follows that $y:=F_2(x) \in CBV[0,1]$. Reasoning similar to that used in the proof of the continuity of $F_2$ shows that $F_2(x_{n_k}) \to y$ in $CBV[0,1]$ as $k\to +\infty$, which means that the sequence $(F_2(x_{n}))_{n \in \mathbb N}$ has a convergent subsequence. Therefore, $F_2$ is completely continuous.
\end{proof}

Now, we are ready to prove the first main result of this Section.

\begin{theorem}\label{thm:solution}
Under the assumptions~\ref{a1}--\ref{a5d} the perturbed nonlinear Hammerstein integral equation~\eqref{eq:bvp_hammerstein} with $\lambda=1$ has a $CBV$-solution.
\end{theorem}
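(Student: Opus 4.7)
The plan is to set $X = CBV[0,1]$ and apply Theorem~\ref{thm:krasnosielskii} to the operators $F_1$ and $F_2$ defined by~\eqref{eq:f_1} and~\eqref{eq:f_2}, with the set $M$ taken to be a closed ball $B_X(\theta,r)$ of sufficiently large radius. Condition~\ref{it:krasnosielski_2} of Theorem~\ref{thm:krasnosielskii} is immediate from Lemma~\ref{lem:F_1_properties}: $F_1$ is linear and bounded on $CBV[0,1]$, and, using~\ref{a3}, one has $r(F_1) \leq \babs{\alpha[v]-\beta[v]} < 1$. Consequently $I-F_1$ is invertible with bounded inverse, and we denote $N := \norm{(I-F_1)^{-1}}$. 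Lemma~\ref{lem:F_2_properties} shows that $F_2$, viewed as a map from $X$ into $X$, is continuous and maps bounded sets into relatively compact sets, so condition~\ref{it:krasnosielski_1} of Theorem~\ref{thm:krasnosielskii} will hold on any bounded $M$.

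The heart of the proof is to verify condition~\ref{it:krasnosielski_3} via Remark~\ref{rem:krasosielski}: we need to choose $r>0$ so that $F_2\bigl(B_X(\theta,r)\bigr) \subseteq B_X\bigl(\theta,r/N\bigr)$. To this end, fix $x \in CBV[0,1]$ with $\norm{x}_{BV} \leq r$. Using~\ref{a4}~\ref{a4c} and the estimate~\eqref{eq:k_m} (valid for every finite partition of $[0,1]$) in the style of the proof of Lemma~\ref{lem:F_2_properties}, I would bound
\[
 \norm{F_2(x)}_{BV} \leq \abs{F_2(x)(0)} + \var_{[0,1]} F_2(x) \leq \psi(\norm{x}_{BV}) \int_0^1 \bigl(\abs{k(0,s)}+m(s)\bigr)\phi(s)\,\textup ds.
\]
The integral $C := \int_0^1 \bigl(\abs{k(0,s)}+m(s)\bigr)\phi(s)\,\textup ds$ is finite: $\phi \in L^p[0,1]$ by~\ref{a4}~\ref{a4c}, while $m \in L^q[0,1]$ by~\ref{a5a} and $k(0,\cdot)\in L^q[0,1]$ by~\ref{a5}, so H\"older's inequality gives $C<+\infty$. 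Since $\psi$ is non-decreasing, we obtain $\norm{F_2(x)}_{BV} \leq C\,\psi(r)$ for all $x \in B_X(\theta,r)$.

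By the growth condition~\ref{a4d}, $\psi(r)/r \to 0$ as $r \to +\infty$, so we may pick $r_0>0$ large enough that $C\,\psi(r_0) \leq r_0/N$. Setting $M := B_X(\theta, r_0)$, we have $F_2(M) \subseteq B_X(\theta, r_0/N)$, and since $F_2(M)$ is additionally relatively compact in $X$ by Lemma~\ref{lem:F_2_properties}, the strengthened form of~\ref{it:krasnosielski_1} described in Remark~\ref{rem:krasosielski} holds. Therefore condition~\ref{it:krasnosielski_3} of Theorem~\ref{thm:krasnosielskii} is satisfied, and an application of Theorem~\ref{thm:krasnosielskii} produces a fixed point $x \in M$ of $F_1+F_2$, which is precisely a $CBV$-solution of~\eqref{eq:bvp_hammerstein} with $\lambda=1$.

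The only genuinely non-routine step is locating the correct ball: one must simultaneously control $\norm{(I-F_1)^{-1}}$ (which depends on $\alpha,\beta,v,w$) and exploit the sublinear growth of $\psi$ at infinity to make the estimate $C\psi(r)\leq r/N$ hold. Everything else is bookkeeping based on the previously established lemmas.
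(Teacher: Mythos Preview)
Your proposal is correct and follows essentially the same route as the paper: verify $r(F_1)<1$ via Lemma~\ref{lem:F_1_properties}, use Lemma~\ref{lem:F_2_properties} for complete continuity, bound $\norm{F_2(x)}_{BV}$ by $C\psi(r)$ with $C=\int_0^1(\abs{k(0,s)}+m(s))\phi(s)\,\textup ds$, and then invoke~\ref{a4d} together with Remark~\ref{rem:krasosielski} to choose $r$ so that $C\psi(r)\le r/\norm{(I-F_1)^{-1}}$. The only cosmetic difference is that the paper writes down an explicit upper bound $c\ge\norm{(I-F_1)^{-1}}$ (by summing the estimates of Lemma~\ref{lem:F_1_properties}) and inserts a harmless ``$1+$'' in front of $C$; this explicit constant is what makes the later Remark~\ref{rem:assumtpion_weak} and Corollary~\ref{cor:1} quantitative, but for the proof of the theorem itself your abstract use of $N=\norm{(I-F_1)^{-1}}$ is entirely sufficient.
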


\begin{proof}
First, let us observe that by the assumption~\ref{a4d} it is possible to find a positive number $r$ such that
\begin{equation}\label{eq:psi}
 \psi(r)\leq r c^{-1}\biggl(1+\int_0^1 \bigl(m(s)+\abs{k(0,s)}\bigr) \phi(s)\textup ds \biggr)^{-1},
\end{equation}
where
\begin{equation}\label{eq:noma_i_b}
 c:=1+\norm{\alpha}+\norm{\alpha-\beta}\cdot \norm{w}_{BV}+  \frac{\norm{\alpha-\beta}\cdot \bnorm{\alpha[v]v+\beta[v]w}_{BV}}{1-\babs{\alpha[v]-\beta[v]}}.
\end{equation}
Set $M:=B_{CBV}(\theta,r)$ and consider the mappings $F_1 \colon CBV[0,1] \to CBV[0,1]$ and $F_2 \colon M \to CBV[0,1]$ given by the formulas~\eqref{eq:f_1} and~\eqref{eq:f_2}, respectively. From Lemmas~\ref{lem:F_1_properties} and~\ref{lem:F_2_properties}
it follows that $F_1$ and $F_2$ satisfy the assumptions~\ref{it:krasnosielski_1} and~\ref{it:krasnosielski_2} of Theorem~\ref{thm:krasnosielskii}. Moreover, for every $x \in M$, we have
\[
 \norm{F_2(x)}_{BV} \leq \psi(r)\cdot  \int_0^1 \bigl(m(s)+\abs{k(0,s)}\bigr) \phi(s)\textup ds
\]
(cf. the proof of Lemma~\ref{lem:F_2_properties}). This, together with the fact that
\[
 \norm{(I-F_1)^{-1}} = \bgnorm{\sum_{n=0}^{\infty} F_1^n} \leq \sum_{n=0}^{\infty} \norm{F_1^n} = c
\]
(cf. Lemma~\ref{lem:F_1_properties}), yields
\[
 \norm{F_2(x)}_{BV}\leq \frac{r}{\norm{(I-F_1)^{-1}}} \qquad \text{for $x \in M$}.
\]
To end the proof it suffices to apply Remark~\ref{rem:krasosielski} and Theorem~\ref{thm:krasnosielskii}.
\end{proof}

\begin{remark}\label{rem:assumtpion_weak}
From the proof of Theorem~\ref{thm:solution} it follows that the condition~\ref{a4d} can be replaced by a weaker (but at the same time more technical) condition:
\begin{enumerate}[label=\textup{(B$_{\arabic*}$)}]
\setcounter{enumi}{4}
 \item\label{a11_} there exists $r>0$ such that $\psi(r)\leq r c^{-1}\bigl(1+\int_0^1 (m(s)+\abs{k(0,s)}) \phi(s)\textup ds \bigr)^{-1}$, where $c$ is given by~\eqref{eq:noma_i_b}.
\end{enumerate}
\end{remark}

\begin{corollary}\label{cor:1}
Suppose that the assumptions~\ref{a1}--\ref{a4} and~\ref{a5}--\ref{a5d} hold. Then there exists $\lambda_0 >0$ such that the perturbed nonlinear Hammerstein integral equation~\eqref{eq:bvp_hammerstein} has a $CBV$-solution for every $\lambda \in \mathbb R$ such that $\abs{\lambda}\leq \lambda_0$.
\end{corollary}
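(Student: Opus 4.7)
The plan is to reduce the corollary to Theorem~\ref{thm:solution} by absorbing the parameter $\lambda$ into the nonlinearity and exploiting the weakened hypothesis furnished by Remark~\ref{rem:assumtpion_weak}. Observe that the equation~\eqref{eq:bvp_hammerstein} can be rewritten as
\[
x(t) = \alpha[x]v(t) + \beta[x]w(t) + \int_0^1 k(t,s)\tilde f(s,x(s))\,\textup ds, \quad t \in [0,1],
\]
where $\tilde f(t,u) := \lambda f(t,u)$. Since $f$ satisfies the Carath\'eodory conditions of~\ref{a4}, so does $\tilde f$, and from~\ref{a4}\,\ref{a4c} we get the majorization $\babs{\tilde f(t,u)} \leq \abs{\lambda}\phi(t)\psi(\abs{u})$ for $t \in [0,1]$ and $u \in \mathbb R$. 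Because~\ref{a4d} is the only assumption of Theorem~\ref{thm:solution} that is missing here, our task reduces, by Remark~\ref{rem:assumtpion_weak}, to producing a single $r>0$ for which the inequality
\begin{equation}\label{eq:cor_key}
\abs{\lambda}\psi(r) \leq \frac{r}{c \left(1 + \int_0^1 \bigl(m(s) + \abs{k(0,s)}\bigr)\phi(s)\,\textup ds\right)}
\end{equation}
holds, where $c$ is the constant defined by~\eqref{eq:noma_i_b} (this constant depends only on $\alpha,\beta,v,w$ and is therefore unaffected by the rescaling by $\lambda$).

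To produce such an $r$, I would fix any $r_0 > 0$ once and for all and then set
\[
\lambda_0 := \frac{r_0}{c\bigl(1 + \psi(r_0)\bigr) \left(1 + \int_0^1 \bigl(m(s) + \abs{k(0,s)}\bigr)\phi(s)\,\textup ds\right)}.
\]
Note that $\lambda_0 > 0$, since the integral $\int_0^1 \bigl(m(s) + \abs{k(0,s)}\bigr)\phi(s)\,\textup ds$ is finite by~\ref{a4}\,\ref{a4c},~\ref{a5} and~\ref{a5a} together with H\"older's inequality, and since $\psi(r_0) \geq 0$. Then for every $\lambda \in \mathbb R$ with $\abs{\lambda} \leq \lambda_0$ one has
\[
\abs{\lambda}\psi(r_0) \leq \lambda_0 \psi(r_0) \leq \frac{r_0 \psi(r_0)}{c\bigl(1 + \psi(r_0)\bigr)\left(1 + \int_0^1 \bigl(m(s) + \abs{k(0,s)}\bigr)\phi(s)\,\textup ds\right)} \leq \frac{r_0}{c\left(1 + \int_0^1 \bigl(m(s) + \abs{k(0,s)}\bigr)\phi(s)\,\textup ds\right)},
\]
so~\eqref{eq:cor_key} is fulfilled with $r = r_0$.

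The conclusion now follows by invoking Theorem~\ref{thm:solution} for the nonlinearity $\tilde f = \lambda f$ via Remark~\ref{rem:assumtpion_weak}, which guarantees a $CBV$-solution of the rewritten equation, hence of~\eqref{eq:bvp_hammerstein}. I do not anticipate any genuine obstacle: the proof is a bookkeeping argument, and the only point requiring a little care is to verify that substituting $\lambda f$ for $f$ leaves the assumptions~\ref{a4},~\ref{a5}--\ref{a5d} and the constant $c$ from~\eqref{eq:noma_i_b} intact, which is immediate.
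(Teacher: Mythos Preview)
Your proposal is correct and follows essentially the same approach as the paper's own proof: absorb $\lambda$ into the nonlinearity, replace $\psi$ by $\abs{\lambda}\psi$, and verify condition~\ref{a11_} of Remark~\ref{rem:assumtpion_weak} in lieu of the missing~\ref{a4d}. The only difference is cosmetic---the paper fixes $r_0=1$ whereas you allow an arbitrary $r_0>0$---so the two arguments are the same up to this inessential choice.
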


\begin{proof}
Let
\[
 \lambda_0 := \frac{1}{c(\psi(1)+1)}\biggl(1+\int_0^1 \bigl(m(s)+\abs{k(0,s)}\bigr)\phi(s)\textup ds \biggr)^{-1},
\]
where the number $c$ is given by~\eqref{eq:noma_i_b}. The claim follows from Remark~\ref{rem:assumtpion_weak} and  Theorem~\ref{thm:solution} if we replace $f$ and $\psi$ with $\lambda f$ and $\abs{\lambda} \psi$.
\end{proof}

It turns out that if we strengthen the condition~\ref{a3}, then~\ref{a1} is not required. Indeed, by a similar argument to the proof of Theorem~\ref{thm:solution}, in view of the classical Krasnosel{\cprime}slki{\u\i}'s theorem (see, for example,~\cites{Park, K}) and Lemma~\ref{lem:F_1_properties} (cf. also Remark~\ref{rem:properties_F1}), one can establish the following result.

\begin{theorem}\label{thm:solution2}
Suppose that the assumptions~\ref{a2}--\ref{a5d}  hold. If, additionally, the functionals $\alpha, \beta \in CBV^{*}[0,1]$ satisfy the following condition\textup:
\begin{enumerate}[label=\textup{(B$_{\arabic*}$)}]
\setcounter{enumi}{5}
 \item\label{b6} $\norm{\alpha} + \norm{\alpha-\beta}\cdot \norm{w}_{BV}<1$,
\end{enumerate}
then the perturbed nonlinear Hammerstein integral equation~\eqref{eq:bvp_hammerstein} with $\lambda=1$ has a $CBV$-solution.
\end{theorem}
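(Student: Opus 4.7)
The plan is to mimic the proof of Theorem~\ref{thm:solution}, replacing the Neumann-series bound on $\bnorm{(I-F_1)^{-1}}$ by the elementary geometric estimate $\bnorm{(I-F_1)^{-1}}\le(1-\norm{F_1})^{-1}$ available whenever $F_1$ is a strict linear contraction, and invoking the classical Krasnosel{\cprime}ski{\u\i} fixed-point theorem for the sum of a contraction and a compact operator in place of Theorem~\ref{thm:krasnosielskii}.

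First I would define $F_1$ and $F_2$ by~\eqref{eq:f_1} and~\eqref{eq:f_2}. The key observation, flagged explicitly in Remark~\ref{rem:properties_F1}, is that the estimate
\[
\norm{F_1}\le\norm{\alpha}+\norm{\alpha-\beta}\cdot\norm{w}_{BV}
\]
from Lemma~\ref{lem:F_1_properties} does not rely on~\ref{a1}; hence under hypothesis~\ref{b6} one immediately has $\norm{F_1}<1$, so $F_1\colon CBV[0,1]\to CBV[0,1]$ is a bounded linear contraction. Complete continuity of $F_2\colon BV[0,1]\to CBV[0,1]$ is granted by Lemma~\ref{lem:F_2_properties}, whose proof uses only~\ref{a4} and~\ref{a5}--\ref{a5d}, and therefore remains valid in the present setting.

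Next, setting $k_0\zdef\norm{F_1}<1$ and invoking~\ref{a4d}, I would select $r>0$ large enough that
\[
\psi(r)\le\frac{r(1-k_0)}{1+\int_0^1\bigl(m(s)+\abs{k(0,s)}\bigr)\phi(s)\,\textup ds}.
\]
Put $M\zdef B_{CBV}(\theta,r)$. Repeating verbatim the computation already carried out in the proof of Lemma~\ref{lem:F_2_properties} yields $\norm{F_2(x)}_{BV}\le\psi(r)\int_0^1\bigl(m(s)+\abs{k(0,s)}\bigr)\phi(s)\,\textup ds\le r(1-k_0)$ for every $x\in M$. Adapting Remark~\ref{rem:krasosielski} with the bound $\bnorm{(I-F_1)^{-1}}\le(1-k_0)^{-1}$ shows that whenever $x=F_1(x)+F_2(y)$ for some $y\in M$, one has $\norm{x}_{BV}\le(1-k_0)^{-1}\norm{F_2(y)}_{BV}\le r$, so $x\in M$. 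All three hypotheses of the classical Krasnosel{\cprime}ski{\u\i} fixed-point theorem (with contraction constant $k_0$) are then in force, and it produces a fixed point of $F_1+F_2$ in $M$, i.e.\ a $CBV$-solution of~\eqref{eq:bvp_hammerstein} with $\lambda=1$.

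No serious obstacle is anticipated; the entire content of the theorem is that trading~\ref{a3} for the stronger~\ref{b6} upgrades the spectral-radius control on $F_1$ to an outright contraction estimate, which in turn makes~\ref{a1} superfluous. The only point worth a careful second look is the claim that neither the bound on $\norm{F_1}$ in Lemma~\ref{lem:F_1_properties} nor the compactness argument in Lemma~\ref{lem:F_2_properties} silently exploits~\ref{a1}; this is precisely the content of Remark~\ref{rem:properties_F1} together with the explicit hypothesis list of Lemma~\ref{lem:F_2_properties}, so the verification is immediate.
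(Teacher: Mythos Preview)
Your proposal is correct and matches the paper's approach: the paper does not give a formal proof of this theorem but merely states (in the sentence preceding it) that one argues as in Theorem~\ref{thm:solution}, using the classical Krasnosel{\cprime}ski{\u\i} theorem together with Lemma~\ref{lem:F_1_properties} and Remark~\ref{rem:properties_F1}. You have filled in exactly those details---the contraction bound $\norm{F_1}<1$ from~\ref{b6}, the choice of $r$ via~\ref{a4d}, and the invariance check via Remark~\ref{rem:krasosielski}---in the intended way.
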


\begin{corollary}\label{cor:hammerstein_ver_2}
Suppose that the conditions~\ref{a2}--\ref{a4},~\ref{a5}--\ref{a5d} and~\ref{b6} hold. Then there exists $\lambda_0 >0$ such that the perturbed nonlinear Hammerstein integral equation~\eqref{eq:bvp_hammerstein} has a $CBV$-solution for every $\lambda \in \mathbb R$ such that $\abs{\lambda}\leq \lambda_0$. 
\end{corollary}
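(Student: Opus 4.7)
The approach is to mirror the proof of Corollary~\ref{cor:1}, substituting Theorem~\ref{thm:solution2} for Theorem~\ref{thm:solution}. Writing~\eqref{eq:bvp_hammerstein} as the fixed point problem $x = F_1(x) + \lambda F_2(x)$ with $F_1, F_2$ defined by~\eqref{eq:f_1}--\eqref{eq:f_2}, we observe that replacing $f$ by $\lambda f$ leaves~\ref{a4} and the kernel hypotheses~\ref{a5}--\ref{a5d} intact (as well as $F_1$ itself), while replacing the majorant $\psi$ by $\abs{\lambda}\psi$. By Lemma~\ref{lem:F_1_properties} together with~\ref{b6}, $F_1$ is a contraction with $\norm{F_1} \le \norm{\alpha} + \norm{\alpha-\beta}\cdot\norm{w}_{BV} < 1$, while Lemma~\ref{lem:F_2_properties} gives complete continuity of $F_2$.

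Unpacking the proof of Theorem~\ref{thm:solution2}, the classical Krasnosel{\cprime}slki{\u\i} theorem is to be applied on $M := B_{CBV}(\theta,r)$, and~\ref{a4d} is used solely to produce a radius $r>0$ making the invariance $\norm{F_1(x) + \lambda F_2(y)}_{BV} \le r$ hold for all $x,y \in M$. From the proof of Lemma~\ref{lem:F_2_properties} one extracts the bound $\norm{F_2(y)}_{BV} \le \psi(r)\int_0^1 (m(s)+\abs{k(0,s)})\phi(s)\textup ds$, so the invariance reduces, exactly as in Remark~\ref{rem:assumtpion_weak}, to the requirement that some $r>0$ satisfy
\[
 \abs{\lambda}\psi(r)\biggl(1+\int_0^1 (m(s)+\abs{k(0,s)})\phi(s)\textup ds\biggr) \le r\bigl(1-\norm{\alpha}-\norm{\alpha-\beta}\cdot\norm{w}_{BV}\bigr).
\]

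Fix $r := 1$ and set
\[
 \lambda_0 := \frac{1-\norm{\alpha}-\norm{\alpha-\beta}\cdot\norm{w}_{BV}}{(\psi(1)+1)\bigl(1+\int_0^1 (m(s)+\abs{k(0,s)})\phi(s)\textup ds\bigr)},
\]
which is strictly positive by~\ref{b6}; the summand $+1$ accompanying $\psi(1)$ merely guards against the degenerate case $\psi(1)=0$. For any $\lambda$ with $\abs\lambda \le \lambda_0$ the displayed inequality above holds with $r=1$, so that $F_1+\lambda F_2$ maps $B_{CBV}(\theta,1) \times B_{CBV}(\theta,1)$ into $B_{CBV}(\theta,1)$ in the Krasnosel{\cprime}slki{\u\i} sense; the theorem then supplies a fixed point, which is the required $CBV$-solution. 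The only delicate point is to notice that Theorem~\ref{thm:solution2} is in fact established under the weaker quantitative condition just displayed, in parallel with the relationship between Theorem~\ref{thm:solution} and Remark~\ref{rem:assumtpion_weak}; this is an elementary book-keeping matter rather than a conceptual difficulty.
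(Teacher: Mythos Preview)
Your argument is correct and follows exactly the route the paper intends: the paper leaves Corollary~\ref{cor:hammerstein_ver_2} without proof, tacitly relying on the same reduction used for Corollary~\ref{cor:1} (replace $f$ by $\lambda f$, $\psi$ by $\abs{\lambda}\psi$, take $r=1$), only with Theorem~\ref{thm:solution2} and the contraction bound $\norm{F_1}\le\norm{\alpha}+\norm{\alpha-\beta}\cdot\norm{w}_{BV}<1$ in place of the spectral-radius estimate. Your explicit formula for $\lambda_0$ and your remark that Theorem~\ref{thm:solution2} admits a quantitative variant parallel to Remark~\ref{rem:assumtpion_weak} are precisely what the paper is suppressing.
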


\subsection{BVPs with non-local BCs} 
\label{ssec:BVP_nl}

In this Section, as an application of the abstract results for the perturbed nonlinear Hammerstein integral equation, we are going to study the existence of solutions to the following second-order differential equation
\begin{equation}\label{eq:bvp}
x''(t)=-\lambda f(t,x(t)), \qquad t \in [0,1],
\end{equation}
with the non-local BCs of the form
\begin{equation}\label{eq:bvp_bc}
 x(0)=\int_0^1 A(s)\textup dx(s) \qquad \text{and} \qquad x(1)=\int_0^1 B(s)\textup dx(s).
\end{equation}
For simplicity, as in Section~\ref{sec:BVP_LW}, we assume that the function $f \colon [0,1] \times \mathbb R \to \mathbb R$ is continuous. 

Before we proceed to the main theorems of this Section, we will discuss a class of functions for which the BCs~\eqref{eq:bvp_bc} are well-posed and we will prove a result concerning continuous linear functionals on $CBV[0,1]$.

Let us introduce the following notation. Given a number $\varepsilon>0$, we say that a bounded function $A \colon [a,b] \to \mathbb R$, where $-\infty<a<b<+\infty$, belongs to the family $\Omega_{\varepsilon}[a,b]$, if there exists $\delta>0$ such that $\osc_{[t,s]}A \leq \varepsilon$, whenever $t,s \in [a,b]$ are such that $0\leq s-t\leq \delta$; here the symbol $\osc_{[t,s]} A$ denotes the \emph{oscillation} of the function $A$ over the interval $[t,s]$, that is, $\osc_{[t,s]} A:=\sup_{t\leq \tau \leq \sigma \leq s}\abs{A(\sigma)-A(\tau)}$. Let us denote by $\Omega[0,1]$ the set of all bounded functions $A \colon [0,1] \to \mathbb R$ such that for every $\varepsilon>0$ there is $a \in (0,1)$ such that $A|_{[0,a]}\in \Omega_{\varepsilon}[0,a]$ and $A|_{[a,1]} \in BV[a,1]$. For simplicity, let us also set $\widehat{\Omega}[0,1]:=\Omega[0,1] \cup C[0,1] \cup BV[0,1]$.

\begin{example}
An example of a bounded function $A\colon [0,1] \to \mathbb R$ which belongs to $\widehat{\Omega}[0,1]$ but is neither continuous nor of bounded variation on the interval $[0,1]$ is given by the following formula
\[
 A(t)=\begin{cases}
       \frac{1}{n}, & \text{if $t \in (\frac{1}{n+1},\frac{1}{n})$, $n \in \mathbb N$},\\
			  0, & \text{otherwise}.
			\end{cases}
\]
\end{example}

\begin{lemma}\label{lem:RS_integral}
If $x \in CBV[0,1]$ and $A \in \widehat{\Omega}[0,1]$, then the Riemann--Stieltjes integral $\int_0^1 A(s)\textup dx(s)$ exists.
\end{lemma}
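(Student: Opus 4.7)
The plan is to handle the three constituent classes of $\widehat{\Omega}[0,1]$ separately, relying on the refinement form of the Cauchy criterion for the Riemann--Stieltjes integral.

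If $A \in C[0,1]$, then since $x \in CBV[0,1] \subseteq BV[0,1]$, the integral $\int_0^1 A(s)\,\textup dx(s)$ exists by the classical theorem that a continuous integrand is Riemann--Stieltjes integrable against a function of bounded variation. If $A \in BV[0,1]$, the same theorem with the roles exchanged (a BV integrand against a continuous integrator, noting that $x \in C[0,1]$) again gives existence.

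The main case is $A \in \Omega[0,1]$. Fix $\eta > 0$; I aim to exhibit a partition of $[0,1]$ witnessing the Cauchy criterion at scale $\eta$. First I pick $\varepsilon > 0$ so small that $2\varepsilon\,\var_{[0,1]} x < \eta/2$. By the very definition of $\Omega[0,1]$, there exist $a \in (0,1)$ and $\delta > 0$ such that $\osc_{[t,s]} A \leq \varepsilon$ whenever $t,s \in [0,a]$ with $s-t \leq \delta$, and moreover $A|_{[a,1]} \in BV[a,1]$. Applied to the subinterval $[a,1]$, the previous case (with $x|_{[a,1]} \in C[a,1]$ and $A|_{[a,1]} \in BV[a,1]$) yields the existence of $\int_a^1 A(s)\,\textup dx(s)$; hence there is a partition $\pi^{(2)}$ of $[a,1]$ whose refinements produce Riemann--Stieltjes sums within $\eta/2$ of each other. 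On $[0,a]$, let $\pi^{(1)}$ be any partition with mesh at most $\delta$. For any refinement $\tilde{\pi}$ of $\pi^{(1)}$ and any choice of tags, a telescoping estimate, grouping subintervals of $\tilde{\pi}$ inside each subinterval of $\pi^{(1)}$ and invoking $\osc_{[t_{i-1},t_i]}A \leq \varepsilon$, yields
\[
 \babs{S(\pi^{(1)},\xi) - S(\tilde\pi,\tilde\xi)} \leq \varepsilon\,\var_{[0,a]} x.
\]
Consequently, any two Riemann--Stieltjes sums over refinements of $\pi^{(1)}$ differ by at most $2\varepsilon\,\var_{[0,1]} x < \eta/2$. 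Setting $\pi_0 := \pi^{(1)} \cup \{a\} \cup \pi^{(2)}$, every refinement of $\pi_0$ contains the point $a$ and hence splits additively at $a$; combining the two estimates shows that any two Riemann--Stieltjes sums over refinements of $\pi_0$ differ by at most $\eta$, which verifies the Cauchy criterion and yields the existence of $\int_0^1 A(s)\,\textup dx(s)$.

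The hard part is the third case: the function $A$ is not controlled on the whole interval $[0,1]$ by any single BV or continuity hypothesis, and the ``bad'' portion of $A$ near $0$ can be tamed only via the uniform-oscillation condition $\Omega_\varepsilon[0,a]$. The key observation is that this oscillation condition, coupled with $x \in BV[0,1]$, is strong enough to force the Cauchy criterion on the left subinterval $[0,a]$, while the classical BV-versus-continuous result handles the right subinterval $[a,1]$.
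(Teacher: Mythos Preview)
Your argument is correct and follows a route that is close in spirit to, but organised differently from, the paper's proof. Both proofs dispose of the cases $A\in C[0,1]$ and $A\in BV[0,1]$ by the classical ``continuous versus bounded variation'' existence theorem, and for $A\in\Omega[0,1]$ both exploit the splitting at the point $a$ furnished by the definition of $\Omega[0,1]$. The difference lies in how the two pieces are glued. The paper does \emph{not} force $a$ to be a partition point; instead it verifies directly the criterion from \L{}ojasiewicz, namely that $\sum_i \osc_{[t_{i-1},t_i]}A\cdot\var_{[t_{i-1},t_i]}x$ is small for \emph{every} partition of $[0,1]$ with sufficiently small mesh. The subinterval $[t_j,t_{j+1}]$ straddling $a$ is then controlled separately, using only the crude bound $\osc A\le 2\|A\|_\infty$ together with the fact that $t\mapsto\var_{[0,t]}x$ is continuous (so $\var_{[t_j,t_{j+1}]}x$ is small). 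Your proof, by contrast, inserts $a$ as a partition point and appeals to the refinement form of the Cauchy criterion, which makes the gluing step trivial and avoids any special treatment of a straddling interval.

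One point you should make explicit: the refinement Cauchy criterion establishes the Riemann--Stieltjes integral in the net (Moore--Pollard) sense, whereas the paper, via \L{}ojasiewicz, is working with the mesh-based definition. These agree here because the integrator $x$ is continuous, but that step is worth a sentence. Alternatively, you can upgrade your argument to the mesh criterion with almost no extra work: your telescoping estimate on $[0,a]$ already shows that \emph{any} two tagged partitions of $[0,a]$ with mesh at most $\delta$ have sums within $2\varepsilon\,\var_{[0,a]}x$ (compare each to their common refinement), and on $[a,1]$ the mesh criterion holds by the classical theorem; then a partition of $[0,1]$ with small mesh but not containing $a$ is handled exactly as in the paper, by bounding the contribution of the single straddling interval using the continuity of $x$. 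Either fix closes the small gap between your formulation and the statement as the paper intends it.
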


\begin{proof}
In view of~\cite{Lojasiewicz}*{Theorems~1.5.5 and~1.5.6} it is clear that we may assume that $A \in \Omega[0,1]$. We may also assume that $\norm{A}_{\infty}>0$ and $\var_{[0,1]}x>0$. Then,  given $\varepsilon>0$ there exists $a \in (0,1)$ such that $A|_{[0,a]} \in \Omega_{\eta}[0,a]$ and $A|_{[a,1]} \in BV[a,1]$, where $\eta:=\varepsilon(3\var_{[0,1]}x)^{-1}$. Let $\delta=\min\{\delta_1,\delta_2\}$, where $\delta_1 \in (0,1)$ is chosen according to the definition of the family $\Omega_{\eta}[0,a]$ and $\delta_2 \in (0,1)$ is such a number that 
\[
 \var_{[t,s]}x\leq \min\biggl\{\frac{\varepsilon}{6\norm{A}_{\infty}},\frac{\varepsilon}{3(1+\var_{[a,1]}A)}\biggr\},
\]
whenever $t,s \in [0,1]$ are such that $0 \leq s -t\leq \delta_2$; the number $\delta_2$ exists by the continuity of the function $t \mapsto \var_{[0,t]}x$ (cf.~\cite{Lojasiewicz}*{Theorem~1.3.4}).

If $0=t_0<t_1<\ldots<t_n=1$ is an arbitrary finite partition of the interval $[0,1]$ such that $\max_{1\leq i\leq n}\abs{t_i-t_{i-1}}\leq \delta$, then\footnote{If the upper summation limit is smaller than the lower one, then, by definition, the sum is equal to zero.}
\begin{align*}
 \sum_{i=1}^n \osc_{[t_{i-1},t_i]}A \cdot \var_{[t_{i-1}, t_i]}x &=  \sum_{i=1}^j \osc_{[t_{i-1},t_i]}A \cdot \var_{[t_{i-1}, t_i]}x +  \osc_{[t_{j},t_{j+1}]}A \cdot \var_{[t_{j}, t_{j+1}]} x +  \sum_{i=j+2}^n \osc_{[t_{i-1},t_i]}A \cdot \var_{[t_{i-1}, t_i]}x,
\end{align*}
where the index $j \in \{0,\ldots,n-1\}$ is chosen in such a way that $t_j\leq a<t_{j+1}$. Thus
\[
 \sum_{i=1}^n \osc_{[t_{i-1},t_i]}A \cdot \var_{[t_{i-1}, t_i]}x \leq \frac{\varepsilon \var_{[0,a]}x}{3\var_{[0,1]}x}  + \frac{2\varepsilon \norm{A}_{\infty}}{6\norm{A}_{\infty}} + \frac{\varepsilon \var_{[a,1]}A}{3(1+\var_{[a,1]}A)} \leq \varepsilon,
\] 
since $\osc_{[t_{i-1},t_i]}A \leq \var_{[t_{i-1},t_i]} A$ (see~\cite{Lojasiewicz}*{Formula~(1.3.4)}). This shows that the condition (B) of~\cite{Lojasiewicz}*{Theorem~1.5.2} is satisfied, and therefore the Riemann--Stieltjes integral $\int_0^1 A(s)\textup dx(s)$ exists.
\end{proof}

From Lemma~\ref{lem:RS_integral} and the properties of the Riemann--Stieltjes integral we have the following straightforward consequence.

\begin{corollary}\label{cor:RS_integral}
If $A \in \widehat{\Omega}[0,1]$, then the formula $x \mapsto \int_0^1 A(s)\textup dx(s)$ defines a continuous linear functional on $CBV[0,1]$.
\end{corollary}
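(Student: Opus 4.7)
The plan is to derive both linearity and continuity directly from the Riemann--Stieltjes approximating sums, using the existence assertion of Lemma~\ref{lem:RS_integral} as the only nontrivial input. Denote the functional by $\Lambda_A(x):=\int_0^1 A(s)\,\textup dx(s)$ for $x\in CBV[0,1]$; by the lemma, $\Lambda_A$ is well defined on all of $CBV[0,1]$.

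First I would verify linearity. For any finite partition $0=t_0<t_1<\ldots<t_n=1$ and any choice of intermediate points $\xi_i\in[t_{i-1},t_i]$, the Riemann--Stieltjes sum
\[
S_\pi(x):=\sum_{i=1}^{n}A(\xi_i)\bigl(x(t_i)-x(t_{i-1})\bigr)
\]
is clearly $\mathbb{R}$-linear in $x$. Since $S_\pi(x)\to \Lambda_A(x)$ as the mesh of $\pi$ tends to $0$ (this is the content of Lemma~\ref{lem:RS_integral}), passage to the limit preserves linearity, giving $\Lambda_A(x+\mu y)=\Lambda_A(x)+\mu\Lambda_A(y)$ for $x,y\in CBV[0,1]$ and $\mu\in\mathbb R$.

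Next I would establish boundedness (hence continuity) by a uniform estimate on the same approximating sums. The key observation is that every element of $\widehat{\Omega}[0,1]=\Omega[0,1]\cup C[0,1]\cup BV[0,1]$ is a bounded function: boundedness is part of the definition of $\Omega_\varepsilon[a,b]$, continuous functions on the compact interval $[0,1]$ are bounded, and $BV$-functions are bounded. Hence $\|A\|_\infty<\infty$, and for every partition $\pi$ and every choice of $\xi_i$,
\[
\bigl|S_\pi(x)\bigr|\leq \|A\|_\infty \sum_{i=1}^{n}\bigl|x(t_i)-x(t_{i-1})\bigr|\leq \|A\|_\infty\cdot\var_{[0,1]} x\leq \|A\|_\infty\cdot\|x\|_{BV}.
\]
Taking the limit along partitions of mesh tending to zero yields $\bigl|\Lambda_A(x)\bigr|\leq \|A\|_\infty\cdot\|x\|_{BV}$ for every $x\in CBV[0,1]$, which shows that $\Lambda_A\in CBV^{\ast}[0,1]$ with operator norm at most $\|A\|_\infty$.

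There is essentially no obstacle beyond invoking Lemma~\ref{lem:RS_integral}, which already did the substantive work of guaranteeing the existence of the integral for the potentially non-$BV$ integrand $A$. The only point requiring a line of justification is that $\widehat{\Omega}[0,1]$ consists of bounded functions, so that the sup-norm bound on the Riemann--Stieltjes sums is finite; this is why the corollary is labeled as a straightforward consequence.
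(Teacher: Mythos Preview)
Your proof is correct and is exactly the kind of argument the paper intends: the text gives no explicit proof, stating only that the corollary follows from Lemma~\ref{lem:RS_integral} ``and the properties of the Riemann--Stieltjes integral,'' and your write-up simply makes those properties (linearity in the integrator and the standard bound $\bigl|\int_0^1 A\,\textup dx\bigr|\le\|A\|_\infty\var_{[0,1]}x$) explicit. The one-line check that every $A\in\widehat{\Omega}[0,1]$ is bounded is the only additional observation needed, and you supply it.
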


\begin{remark}
Clearly, Lemma~\ref{lem:RS_integral} and Corollary~\ref{cor:RS_integral} would also be true, if in the definition of the class $\Omega[0,1]$ we assumed that $A|_{[a,1]} \in \Omega_{\varepsilon}[a,1]$ and $A|_{[0,a]}\in BV[0,a]$.
\end{remark}

Now, let us return to the BVP \eqref{eq:bvp}--\eqref{eq:bvp_bc}. It can be shown that if $A,B \in \widehat{\Omega}[0,1]$ and the function $f \colon [0,1]\times\mathbb R\to\mathbb R$ is continuous, then the BVP~\eqref{eq:bvp}--\eqref{eq:bvp_bc} is equivalent to the following perturbed nonlinear Hammerstein integral equation

\begin{equation}\label{eq:perturbed_Hammerstein_2}
x(t)= \int_0^1 (1-t)A(s)\textup dx(s) + \int_0^1 tB(s)\textup dx(s)+\lambda \int_0^1 k(t,s)f(s,x(s))\textup ds,\ t\in [0,1],
\end{equation}
where the kernel $k$ has the following form
\begin{equation}\label{eq:kernel_2}
k(t,s)=\begin{cases}
				s(1-t), & \text{if $0\leq s\leq t \leq 1$},\\
				t(1-s), & \text{if $0 \leq t <s \leq 1$},
			 \end{cases}
\end{equation}
that is, each twice continuously differentiable function $x\colon [0,1] \to \mathbb R$ which satisfies~\eqref{eq:bvp}--\eqref{eq:bvp_bc} is a $CBV$-solution to~\eqref{eq:perturbed_Hammerstein_2}, and vice-versa.

\begin{remark}\label{rem:kernel_BVP2}
It is easy to see that the kernel $k$ given by~\eqref{eq:kernel_2} is continuous and satisfies the condition~\ref{it:B_kern} with the function $m \colon [0,1] \to [0,+\infty)$ given by $m(s)=1$.
\end{remark}

\begin{theorem}\label{thm:bvp_nlc}
Let $f \colon [0,1]\times\mathbb R\to\mathbb R$ be a continuous function. Moreover, let the functions $A,B \in \widehat{\Omega}[0,1]$ be such that 
\[
 \bgabs{\int_0^1[A(s)-B(s)]\textup ds}<1.
\]
Then there exists a number $\lambda_0>0$ such that for any $\lambda \in \mathbb R$ satisfying $\abs{\lambda}\leq \lambda_0$ the BVP \eqref{eq:bvp}--\eqref{eq:bvp_bc} has a solution.
\end{theorem}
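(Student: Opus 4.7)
The plan is to translate the BVP~\eqref{eq:bvp}--\eqref{eq:bvp_bc} into the perturbed Hammerstein equation~\eqref{eq:perturbed_Hammerstein_2} and then apply Corollary~\ref{cor:1}. Concretely, I would cast~\eqref{eq:perturbed_Hammerstein_2} in the form~\eqref{eq:bvp_hammerstein} by taking
\[
 v(t)\zdef 1-t,\quad w(t)\zdef t,\quad \alpha[x]\zdef \int_0^1 A(s)\textup dx(s),\quad \beta[x]\zdef \int_0^1 B(s)\textup dx(s),
\]
with kernel $k$ given by~\eqref{eq:kernel_2}. Since $A,B \in \widehat{\Omega}[0,1]$, Corollary~\ref{cor:RS_integral} guarantees that $\alpha,\beta \in CBV^{\ast}[0,1]$, while $v,w \in C^1[0,1] \subseteq CBV[0,1]$ is immediate.

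Next, I would verify the hypotheses~\ref{a1}--\ref{a4} and~\ref{a5}--\ref{a5d} of Corollary~\ref{cor:1}. The identity $v(t)+w(t)=1$ gives~\ref{a2}. Since the constant function $e$ has zero variation, we obtain $\alpha[e]=\beta[e]=0$, which is~\ref{a1}. For~\ref{a3}, because $v$ is of class $C^1$, the Riemann--Stieltjes integral reduces to a Riemann integral and
\[
 \alpha[v]-\beta[v] = \int_0^1 \bigl[A(s)-B(s)\bigr]\textup d(1-s) = -\int_0^1 \bigl[A(s)-B(s)\bigr]\textup ds,
\]
so the standing hypothesis of the theorem yields $\babs{\alpha[v]-\beta[v]}<1$. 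Condition~\ref{a4} follows from the continuity of $f$ by taking $\phi \equiv 1 \in L^p[0,1]$ (for any $p\in (1,+\infty]$) and $\psi(r)\zdef \max\bset{\abs{f(t,u)}}{t\in [0,1],\ \abs{u}\leq r}$, which is finite and non-decreasing. Finally, by Remark~\ref{rem:kernel_BVP2} the kernel~\eqref{eq:kernel_2} is continuous and satisfies~\ref{it:B_kern} with $m\equiv 1$, and hence Remark~\ref{rem:k} supplies~\ref{a5}--\ref{a5d}.

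With all hypotheses of Corollary~\ref{cor:1} in place, one obtains $\lambda_0>0$ such that for every $\lambda \in \mathbb R$ with $\abs{\lambda}\leq \lambda_0$ the equation~\eqref{eq:perturbed_Hammerstein_2} admits a $CBV$-solution $x$. The explicit piecewise-polynomial form of $k$ in~\eqref{eq:kernel_2} allows one to differentiate the integral representation of $x$ twice under the integral sign, so $x \in C^2[0,1]$ with $x''(t)=-\lambda f(t,x(t))$ on $[0,1]$, while direct substitution at $t=0$ and $t=1$ delivers the non-local BCs~\eqref{eq:bvp_bc}. The main obstacle is not any single analytic step but the bookkeeping needed to match the concrete data of the BVP to the abstract hypotheses~\ref{a1}--\ref{a5d}; in particular, Corollary~\ref{cor:RS_integral} is indispensable here, as it is what makes $\alpha$ and $\beta$ legitimate continuous linear functionals on $CBV[0,1]$ in the first place.
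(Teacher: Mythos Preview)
Your proposal is correct and follows essentially the same route as the paper: you cast~\eqref{eq:perturbed_Hammerstein_2} in the form~\eqref{eq:bvp_hammerstein} with $v(t)=1-t$, $w(t)=t$, $\phi\equiv 1$, $\psi(r)=\max_{[0,1]\times[-r,r]}\abs{f}$, invoke Corollary~\ref{cor:RS_integral} and Remark~\ref{rem:kernel_BVP2}/Remark~\ref{rem:k}, and apply Corollary~\ref{cor:1}. Your write-up is in fact more explicit than the paper's---in particular, your computation $\alpha[v]-\beta[v]=-\int_0^1[A(s)-B(s)]\textup ds$ spells out why the hypothesis $\babs{\int_0^1[A(s)-B(s)]\textup ds}<1$ is exactly~\ref{a3}, and you make the passage from the $CBV$-solution back to a classical $C^2$ solution explicit, whereas the paper simply cites the equivalence stated before the theorem.
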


\begin{proof}
Since $f$ is continuous (and thus locally bounded), the function $\psi \colon [0,+\infty) \to [0,+\infty)$ given by
\[
 \psi(r):=\sup\bset{\abs{f(t,u)}}{\text{$t\in[0,1]$ and $u\in[-r,r]$}} \qquad \text{for $r\geq 0$}
\]
is well-defined and non-decreasing. Moreover, $k$ given by~\eqref{eq:kernel_2} is continuous and satisfies the condition~\ref{it:B_kern} with $m(s)=1$ (see Remark~\ref{rem:kernel_BVP2}). Therefore, the proof of Theorem~\ref{thm:bvp_nlc} is a direct consequence of Corollary \ref{cor:1}, Corollary~\ref{cor:RS_integral} and Remark~\ref{rem:k} if one sets $\phi(t)=1$, $v(t)=1-t$ and $w(t)=t$ for $t \in [0,1]$ as well as 
\[
 \alpha[x]=\int_0^1 A(s)\textup dx(s) \quad \text{and} \quad \beta[x]=\int_0^1 B(s)\textup dx(s) \qquad \text{for $x \in CBV[0,1]$}. \qedhere
\]
\end{proof}

Similar approach to the above one can also be used to study the existence of solutions to BVPs with non-local BCs slightly different  from~\eqref{eq:bvp_bc}, namely to BVPs of the following form
\begin{align}\label{eq:bvp_webb_infante}
\begin{split}
&\hspace{2cm} x''(t)=-\lambda f(t,x(t)), \qquad t \in [0,1],\\
& x(0)=\int_0^1 x(s)\textup dA(s), \qquad x(1)=\int_0^1 x(s)\textup dB(s),
\end{split}
\end{align}
where $f \colon [0,1] \times \mathbb R \to \mathbb R$ is continuous and $A,B \in BV[0,1]$. Let us note that since we are interested in classical twice continuously differentiable solutions of the BVP~\eqref{eq:bvp_webb_infante}, the integral BCs are well-posed.

Let us also add that such problems have been investigated via the fixed point index theory approach by, for example, Webb and Infante in~\cite{I-W2}. 

The BVP~\eqref{eq:bvp_webb_infante} is clearly equivalent with the perturbed nonlinear Hammerstein integral equation
\begin{equation}\label{eq:perturbed_Hammerstein_22}
x(t)=\int_0^1 (1-t)x(s)\textup dA(s) + \int_0^1 tx(s)\textup dB(s)+\lambda\int_0^1 k(t,s)f(s,x(s))\textup ds,\ t\in [0,1],
\end{equation}
where $k \colon [0,1] \times [0,1] \to \mathbb R$ is given by~\eqref{eq:kernel_2}, which means, as before, that each twice continuously differentiable function $x \colon [0,1] \to \mathbb R$ which satisfies the BVP~\eqref{eq:bvp_webb_infante} is a $CBV$-solution to~\eqref{eq:perturbed_Hammerstein_22}, and vice-versa. 

In the case of the BVP~\eqref{eq:bvp_webb_infante} we have the following existence result.

\begin{theorem}\label{thm:existence_BVP_2}
Let $f \colon [0,1]\times\mathbb R\to\mathbb R$ be a continuous function. Moreover, let the functions $A,B \in BV[0,1]$ be such that $\var_{[0,1]}A + \var_{[0,1]}(A-B)<1$. Then there exists a number $\lambda_0>0$ such that for any $\lambda \in \mathbb R$ satisfying $\abs{\lambda}\leq \lambda_0$ the BVP~\eqref{eq:bvp_webb_infante} has a solution.
\end{theorem}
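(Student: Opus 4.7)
The plan is to reduce the statement to an application of Corollary \ref{cor:hammerstein_ver_2} applied to the perturbed Hammerstein integral equation \eqref{eq:perturbed_Hammerstein_22}, which (as observed just before Remark~\ref{rem:kernel_BVP2}) is equivalent to the BVP~\eqref{eq:bvp_webb_infante}. With this in mind, I would set
\[
 v(t)=1-t, \quad w(t)=t, \quad \alpha[x]=\int_0^1 x(s)\textup dA(s), \quad \beta[x]=\int_0^1 x(s)\textup dB(s),
\]
and $\phi(t)=1$ for $t\in[0,1]$, together with
\[
 \psi(r)\zdef\sup\bset{\abs{f(t,u)}}{t\in[0,1],\ \abs{u}\leq r},
\]
which is well-defined and non-decreasing since $f$ is continuous.

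First I would check that $\alpha,\beta\in CBV^{\ast}[0,1]$: for $A\in BV[0,1]$ and $x\in CBV[0,1]$ the Riemann--Stieltjes integral $\int_0^1 x\textup dA$ exists, and the standard estimate gives $\abs{\alpha[x]}\leq\norm{x}_{\infty}\var_{[0,1]}A\leq\norm{x}_{BV}\var_{[0,1]}A$, whence $\norm{\alpha}\leq\var_{[0,1]}A$. Since $(\alpha-\beta)[x]=\int_0^1 x(s)\textup d(A-B)(s)$, the same reasoning yields $\norm{\alpha-\beta}\leq\var_{[0,1]}(A-B)$. Next, $v,w\in CBV[0,1]$ and $v(t)+w(t)=1$, so \ref{a2} holds; moreover $\norm{w}_{BV}=\abs{w(0)}+\var_{[0,1]}w=1$. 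Combining these bounds,
\[
 \norm{\alpha}+\norm{\alpha-\beta}\cdot\norm{w}_{BV}\leq\var_{[0,1]}A+\var_{[0,1]}(A-B)<1,
\]
which is exactly \ref{b6}.

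For the remaining assumptions, continuity of $f$ on $[0,1]\times\mathbb R$ yields the Carathéodory conditions, and by construction $\abs{f(t,u)}\leq\phi(t)\psi(\abs{u})$ with $\phi\equiv 1\in L^p[0,1]$ for any $p\in(1,+\infty]$, so \ref{a4} holds. The kernel $k$ given by~\eqref{eq:kernel_2} is continuous and, by Remark~\ref{rem:kernel_BVP2}, satisfies the Lipschitz-type condition~\ref{it:B_kern} with $m(s)=1$; Remark~\ref{rem:k} then tells us that \ref{a5}--\ref{a5d} are fulfilled. At this point, all the hypotheses of Corollary~\ref{cor:hammerstein_ver_2} are in place, and it delivers a positive constant $\lambda_0$ such that for every $\lambda\in\mathbb R$ with $\abs{\lambda}\leq\lambda_0$ the equation~\eqref{eq:perturbed_Hammerstein_22} admits a $CBV$-solution; by the equivalence recalled above, this solution is twice continuously differentiable and solves~\eqref{eq:bvp_webb_infante}.

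There is essentially no deep obstacle: the argument parallels the proof of Theorem~\ref{thm:bvp_nlc} word for word, the only genuinely new ingredient being the operator-norm estimates $\norm{\alpha}\leq\var_{[0,1]}A$ and $\norm{\alpha-\beta}\leq\var_{[0,1]}(A-B)$, which are precisely what converts the numerical hypothesis $\var_{[0,1]}A+\var_{[0,1]}(A-B)<1$ into the abstract smallness condition \ref{b6}. The only minor subtlety worth commenting on is that condition \ref{a1} is not available here (since $\alpha[e]=A(1)-A(0)$ need not vanish), which is why one must invoke Corollary~\ref{cor:hammerstein_ver_2} rather than Corollary~\ref{cor:1}.
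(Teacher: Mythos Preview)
Your proposal is correct and follows essentially the same approach as the paper: you apply Corollary~\ref{cor:hammerstein_ver_2} to the integral equation~\eqref{eq:perturbed_Hammerstein_22} with the same choices $v(t)=1-t$, $w(t)=t$, $\phi\equiv 1$, $m\equiv 1$, $\psi(r)=\sup\{\abs{f(t,u)}:t\in[0,1],\ \abs{u}\leq r\}$, and establish~\ref{b6} via the Riemann--Stieltjes estimates $\norm{\alpha}\leq\var_{[0,1]}A$, $\norm{\alpha-\beta}\leq\var_{[0,1]}(A-B)$, exactly as the paper does. Your additional remark about why~\ref{a1} fails here (so that Corollary~\ref{cor:1} is unavailable) is a correct and useful clarification that the paper leaves implicit.
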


\begin{proof}
Let the continuous functionals $\alpha$ and $\beta$ on the space $CBV[0,1]$ be given by
\[
 \alpha[x]=\int_0^1 x(s)\textup dA(s) \quad \text{and} \quad \beta[x]=\int_0^1 x(s)\textup dB(s) \qquad \text{for $x \in CBV[0,1]$}.
\]
Then, in view of~\cite{Lojasiewicz}*{Theorem~1.6.1}, we have
\[
 \norm{\alpha} = \sup_{\norm{x}_{BV}=1}\babs{\alpha[x]} \leq \var_{[0,1]}A \cdot \sup_{\norm{x}_{BV}=1} \norm{x}_{\infty} \leq \var_{[0,1]}A
\]
and $\norm{\alpha-\beta}\leq \var_{[0,1]}(A-B)$.

To end the proof it suffices to apply Corollary~\ref{cor:hammerstein_ver_2} with $v(t)=1-t$ and $w(t)=t$ as well as $m(s)=1$, $\phi(t)=1$ and 
\[
 \psi(r):=\sup\bset{\abs{f(t,u)}}{\text{$t\in[0,1]$ and $u\in[-r,r]$}} \qquad \text{for $r\geq 0$}
\]
(cf. the proof of Theorem~\ref{thm:bvp_nlc}).
\end{proof}

Finally, we will devote the last part of this Section to illustrating the above existence results by two examples.

\begin{example}\label{ex:111}
Let us consider the BVP
\begin{align}
&\hspace{1cm} x''(t)=-\lambda f(t,x(t)), \qquad t \in [0,1], \label{eq:bvp2i}\\[1.5mm]
& x(0)=\frac{1}{5}x(a) + \frac{1}{5}x(c), \qquad x(1)=\frac{1}{5}x(b)+\frac{1}{5}x(c), \label{eq:bvp2ii}
\end{align}
where $0<a<b<c<1$ and $f \colon [0,1] \times \mathbb R \to \mathbb R$ is a continuous function. It is easy to see that the BVP~\eqref{eq:bvp2i}--\eqref{eq:bvp2ii} is equivalent with the BVP~\eqref{eq:bvp_webb_infante} with $A=\frac{1}{5}\chi_{[a,1]}+\frac{1}{5}\chi_{[c,1]}$ and $B=\frac{1}{5}\chi_{[b,1]} + \frac{1}{5}\chi_{[c,1]}$.

Since $\var_{[0,1]}A + \var_{[0,1]}(A-B)=\frac{4}{5}<1$, from Theorem~\ref{thm:existence_BVP_2} it follows that there is $\lambda_0>0$ such that for every $\lambda \in \mathbb R$ with $\abs{\lambda} \leq \lambda_0$ the BVP~\eqref{eq:bvp2i}--\eqref{eq:bvp2ii} admits at least one solution. Of course, if the function $t \mapsto f(t,0)$ is not identically equal to zero on $[0,1]$, the solution is non-zero.

On the other hand, transforming the BVP~\eqref{eq:bvp2i}--\eqref{eq:bvp2ii} into an equivalent BVP with integral conditions of the form 
\begin{equation}\label{eq:nlc_22}
 x(0)=\int_0^1 \widehat{A}(s)\textup dx(s) \qquad \text{and} \qquad x(1)=\int_0^1 \widehat{B}(s)\textup dx(s)
\end{equation}
with some functions $\widehat{A}, \widehat{B} \in \widehat{\Omega}[0,1]$ (and then applying Theorem~\ref{thm:bvp_nlc}), in general, seems to be a more difficult task, since the functionals $\alpha[x]=\frac{1}{5}x(a) + \frac{1}{5}x(c)$ and $\beta[x]=\frac{1}{5}x(b)+\frac{1}{5}x(c)$ and the functionals generated by the right-hand sides of the BCs~\eqref{eq:nlc_22} have different properties; for example, $\alpha[e]=\beta[e]=\frac{2}{5}>0$, whereas\footnote{Let us recall that by $e$ we denote the constant function given by $e(t)=1$ for $t \in [0,1]$.}
\[
 \int_0^1 \widehat{A}(s)\textup de(s)=\int_0^1 \widehat{B}(s)\textup de(s)=0. 
\]

However, in some cases it can be done. For example, it can be checked that the BVP~\eqref{eq:bvp2i}--~\eqref{eq:bvp2ii} where the function $f$ is given by $f(t,u)=0$ for $(t,u) \in [0,1] \times \mathbb R$ and the following BVP
\begin{align*}
&\hspace{1cm} x''(t)=0, \qquad t \in [0,1],\\[1.5mm]
& x(0)=\int_0^1 \textup dx(s), \qquad x(1)=\int_0^1 \textup dx(s), 
\end{align*}
have only zero solution. This, in particular, means that those BVPs are equivalent.
\end{example}

Now, let us pass to the second example.

\begin{example}\label{ex:3}
Let us consider the BVP
\begin{align}
&\hspace{1cm} x''(t)=-\lambda f(t,x(t)), \qquad t \in [0,1], \label{eq:bvp1i}\\
& x(0)=2x(a) - 2x(c), \qquad x(1)=2x(b)-2x(c), \label{eq:bvp1ii}
\end{align}
where $0<a<b<c<1$ and $f \colon [0,1] \times \mathbb R \to \mathbb R$ is a continuous function. The BVP~\eqref{eq:bvp1i}--\eqref{eq:bvp1ii} is equivalent with~\eqref{eq:bvp}--\eqref{eq:bvp_bc}, where $A=2\chi_{[c,1]}-2\chi_{[a,1]}$ and $B=2\chi_{[c,1]} - 2\chi_{[b,1]}$ (cf. Example~\ref{ex:111} and~\cite{Lojasiewicz}*{Theorem~1.6.7}). Of course, $A,B \in \widehat{\Omega}[0,1]$, and moreover
\[
 \bgabs{\int_0^1[A(s)-B(s)]\textup ds}=2\abs{a-b}.
\]
So if, for example, $a=\frac{1}{5}$, $b=\frac{3}{5}$ and $c=\frac{4}{5}$, then $2\abs{a-b}=\frac{4}{5}<1$, and hence from Theorem~\ref{thm:bvp_nlc} it follows that there exists $\lambda_0>0$ such that for every $\lambda \in \mathbb R$ with $\abs{\lambda} \leq \lambda_0$ the BVP~\eqref{eq:bvp1i}--\eqref{eq:bvp1ii} has at least one solution. 

Now, we would like to show that in general it is not possible to transform the BVP~\eqref{eq:bvp1i}--\eqref{eq:bvp1ii} into an equivalent BVP of the form~\eqref{eq:bvp_webb_infante} to which one could apply Theorem~\ref{thm:existence_BVP_2}.

Suppose that the function $f$ is given by $f(t,u)=2$ for $(t,u) \in [0,1] \times \mathbb R$ and that the BVP~\eqref{eq:bvp1i}--\eqref{eq:bvp1ii} can be equivalently rewritten as the following BVP with non-local integral BCs 
\begin{align}
&\hspace{1cm} x''(t)=-2\lambda, \qquad t \in [0,1], \label{eq:bvp1i2}\\
&  x(0)=\int_0^1 x(s)\textup d\widehat{A}(s), \qquad x(1)=\int_0^1 x(s)\textup d\widehat{B}(s), \label{eq:bvp1ii2}
\end{align}
for some $\widehat{A}, \widehat{B} \in BV[0,1]$. Let us note that one of such equivalent reformulations of the BVP~\eqref{eq:bvp1i}--\eqref{eq:bvp1ii} can be obtained using the functions $\widehat{A}=2\chi_{[a,1]}-2\chi_{[c,1]}$ and $\widehat{B}=2\chi_{[b,1]}-2\chi_{[c,1]}$.

It can be checked that for every $\lambda \in \mathbb R$ the BVP~\eqref{eq:bvp1i}--\eqref{eq:bvp1ii} with the above-defined function $f$ and $a=\frac{1}{5}$, $b=\frac{3}{5}$, $c=\frac{4}{5}$ has a unique solution $x_{\lambda} \colon [0,1] \to \mathbb R$ given by $x_{\lambda}(t)=-\lambda t^2 + \frac{9}{5}\lambda t - \frac{24}{25}\lambda$. Since the BVPs~\eqref{eq:bvp1i}--\eqref{eq:bvp1ii} and~\eqref{eq:bvp1i2}--\eqref{eq:bvp1ii2} are equivalent, we infer that for every $\lambda \in \mathbb R$ we have
\[
 \frac{24}{25}\abs{\lambda}=\abs{x_{\lambda}(0)}=\bgabs{\int_0^1 x_{\lambda}(s)\textup d\widehat{A}(s)}\leq \norm{x_{\lambda}}_{\infty} \cdot \var_{[0,1]}\widehat{A} = \frac{24}{25}\abs{\lambda}\cdot\var_{[0,1]}\widehat{A}.
\]
In particular, $\var_{[0,1]} \widehat{A} \geq 1$, which shows that $\var_{[0,1]} \widehat{A} + \var_{[0,1]}(\widehat{A}-\widehat{B})\geq 1$ and means that the assumptions of Theorem~\ref{thm:existence_BVP_2} cannot be satisfied.
\end{example}

\begin{remark}
In connection with Example~\ref{ex:3} a natural question arises whether instead of transforming the BVP~\eqref{eq:bvp1i}--\eqref{eq:bvp1ii} into an equivalent BVP of the form~\eqref{eq:bvp_webb_infante} and trying to apply Theorem~\ref{thm:existence_BVP_2}, it would not be better to use directly Corollary~\ref{cor:hammerstein_ver_2} with the functionals $\alpha,\beta \in CBV^*[0,1]$ given by $\alpha[x]=2x(a) - 2x(c)$ and $\beta[x]=2x(b) - 2x(c)$.

It turns out that (in general) it would be not, since, as we will show below, the norm of the functional $\alpha$ is at least $2$, and thus the condition~\ref{b6} cannot be satisfied. 

Let $x \in CBV[0,1]$ be a continuous piecewise linear function whose graph is a polygonal line spanned by the points $(0,0)$, $(a,0)$, $(c,1)$ and $(1,1)$. Then $\norm{x}_{BV}=1$ and $\babs{\alpha[x]} = 2$, which proves that $\norm{\alpha}\geq 2$.
\end{remark}

\begin{remark}\label{rem:BV_etc}
The existence of (positive) solutions (and their multiplicity) to BVPs with non-local BCs of the form~\eqref{eq:bvp_webb_infante} was also studied, for example, in~\cites{I2009, W1, I-W2, I-W3}. The main tool used in those papers was the fixed point index theory in the space of continuous functions, and hence additional assumptions on the nonlinearity $f$, such as sub- or superlinearity, were required. 

For completeness, let us also add that if we use the functions $\widehat{A}=2\chi_{[a,1]}-2\chi_{[c,1]}$ and $\widehat{B}=2\chi_{[b,1]}-2\chi_{[c,1]}$ to transform the BVP~\eqref{eq:bvp1i}--\eqref{eq:bvp1ii} from Example~\ref{ex:3} into an equivalent BVP with non-local integral conditions, and additionally we assume that $a=\frac{1}{5}$, $b=\frac{3}{5}$, $c=\frac{4}{5}$, then we cannot apply the existence result from~\cite{I-W2}, since the assumption~$(C_6)$ from the aforementioned article is not satisfied.
\end{remark}

\section*{Acknowledgments}
The authors would like to thank the anonymous referee for his/her suggestions.
This paper was partially written during the visit of G. Infante to the
Optimization and Control Theory Department, Adam Mickiewicz University, Pozna\'n, Poland.
G. Infante is grateful to the people of the aforementioned Department for their kind and warm hospitality.  G. Infante was partially supported by G.N.A.M.P.A. - INdAM (Italy).

\begin{bibdiv}
\begin{biblist}

\bib{AFP}{book}
{title={Functions of Bounded Variation and Free Discontinuity Problems},
author={Ambrosio, L.},
author={Fusco, N.},
author={Pallara, D.},
date={2000},
series={Oxford Mathematical Monographs, Oxford Science Publications},
publisher={Clarendon Press},
address={Oxford},
}

\bib{ABM}{book}
{title={Bounded Variation and Around},
author={Appell, J.},
author={Bana\'s, J.},
author={Merentes, N.},
date={2014},
series={de Gruyter Studies in Nonlinear Analysis and Applications, no. 17},
publisher={De Gruyter},
address={Berlin},
}

\bib{BBZ}{article}{
   author={Borkowski, M.},
   author={Bugajewski, D.},
   author={Zima, M.},
   title={On some fixed-point theorems for generalized contractions and
   their perturbations},
   journal={J. Math. Anal. Appl.},
   volume={367},
   date={2010},
   number={2},
   pages={464--475},
}
	
\bib{Br}{article}{
   author={Brauer, F.},
  title={Constant rate harvesting of populations governed by Volterra integral equations}, 
   journal={J. Math. Anal. Appl.},
   volume={56},
   date={1976},
   pages={18--27},
}

\bib{BBL}{article}
{title={On nonlinear integral equations in the space of functions of bounded generalized $\phi$-variation},
author={Bugajewska, D.},
author={Bugajewski, D.},
author={Lewicki, L.},
journal={J. Integral Equations Appl.},
volume={21},
date={2009},
number={1},
pages={1-20},
}	

\bib{Bug}{article}{
   title={On $BV$-solutions of some nonlinear integral equations},
   author={Bugajewski, D.},
   journal={Integral Equations Operator Theory},
   volume={46},
   date={2003},
   pages={387-398},
}

\bib{BGK}{article}{
title={On continuity and compactness of some nonlinear operators in the spaces of functions of bounded variation},
author={Bugajewski, D.},
author={Gulgowski, J.},
author={Kasprzak, P.},
journal={Ann. Mat. Pura Appl. (4)},
status={(in press)},
}

\bib{BK}{article}{
title={Leggett--Williams type theorems with applications to nonlinear differential and integral equations},
author={Bugajewski, D.},
author={Kasprzak, P.},
journal={Nonlinear Anal.},
volume={114},
date={2015},
pages={116--132},
}

\bib{Bsz}{article}{
title={Theorems about the existence and uniqueness of solutions of a semilinear evolution nonlocal Cauchy problem},
author={Byszewski, L.},
journal={J. Math. Anal. Appl.},
volume={162},
date={1991},
number={2},
pages={494--505},
}

\bib{Conti}{article}{
    AUTHOR = {Conti, R.},
     TITLE = {Recent trends in the theory of boundary value problems for
              ordinary differential equations},
   JOURNAL = {Boll. Un. Mat. Ital. (3)},
    VOLUME = {22},
      YEAR = {1967},
     PAGES = {135--178},
}

\bib{F} {article}{
  AUTHOR = {Faraci, F.},
     TITLE = {Existence and multiplicity results for a non linear
              {H}ammerstein integral equation},
 BOOKTITLE = {Variational analysis and applications},
    SERIES = {Nonconvex Optim. Appl.},
    VOLUME = {79},
     PAGES = {359--371},
 PUBLISHER = {Springer, New York},
      YEAR = {2005},
      }
		
\bib{FM}{article}{
    AUTHOR = {Faraci, F.},
		AUTHOR = {Moroz, V.},
     TITLE = {Solutions of {H}ammerstein integral equations via a
              variational principle},
   JOURNAL = {J. Integral Equations Appl.},
%  FJOURNAL = {Journal of Integral Equations and Applications},
    VOLUME = {15},
      YEAR = {2003},
    NUMBER = {4},
     PAGES = {385--402},
}
		
\bib{F-I-P}{article}{
title={A new criterion for the existence of multiple solutions in cones},
author={Franco, D.},
author={Infante, G.},
author={Per\'an, J.},
journal={Proc. Royal Soc. Edinburgh},
volume={142A},
date={2012},
pages={1043-1050},
}

\bib{G}{article}{
   author={Graef, J. R.},
   author={Kong, L.},
   author={Wang, H.},
   title={A periodic boundary value problem with vanishing Green's function},
   journal={Appl. Math. Lett.},
   volume={21},
   date={2008},
   number={2},
   pages={176--180},
}

\bib{HL}{article}{
   author={Harris, H.},
   author={Laibson, D.},
   title={Dynamic choices of hyperbolic consumers},
   journal={Econometrica},
   volume={69},
   date={2001},
   number={2},
   pages={935--957},
}

\bib{HSt}{book}{
   author={Hewitt, E.},
   author={Stromberg, K.},
   title={Real and Abstract Analysis},
   subtitle={A Modern Treatment of the Theory of Functions of a Real Variable},
   publisher={Springer-Verlag},
   address={Berlin--Heidelberg--New York},
   date={1975},
   series={Graduate Texts in Mathematics},
   volume={25},
}

\bib{giems}{article}{
    AUTHOR = {Infante, G.},
     TITLE = {Eigenvalues of some non-local boundary-value problems},
   JOURNAL = {Proc. Edinb. Math. Soc. (2)},
%  FJOURNAL = {Proceedings of the Edinburgh Mathematical Society. Series II},
    VOLUME = {46},
      YEAR = {2003},
    NUMBER = {1},
     PAGES = {75--86},
      ISSN = {0013-0915},
}

\bib{I2009}{article}{
   author={Infante, G.},
   title={Positive solutions of nonlocal boundary value problems with
   singularities},
   journal={Discrete Contin. Dyn. Syst.},
   date={2009},
   number={suppl.},
   pages={377--384},
}

\bib{I-W1}{article}{
   author={Infante, G.},
   author={Webb, J. R. L.},
     TITLE = {Three-point boundary value problems with solutions that change
              sign},
   JOURNAL = {J. Integral Equations Appl.},
%  FJOURNAL = {Journal of Integral Equations and Applications},
    VOLUME = {15},
      YEAR = {2003},
    NUMBER = {1},
     PAGES = {37--57},
}

\bib{I-W3}{article}{
   author={Infante, G.},
   author={Webb, J. R. L.},
   title={Nonlinear non-local boundary-value problems and perturbed
   Hammerstein integral equations},
   journal={Proc. Edinb. Math. Soc. (2)},
   volume={49},
   date={2006},
   number={3},
   pages={637--656},
}

\bib{kttmna}{article}{
    AUTHOR = {Karakostas, G. L.},
    AUTHOR = {Tsamatos, P. Ch.},
  TITLE = {Existence of multiple positive solutions for a nonlocal
              boundary value problem},
   JOURNAL = {Topol. Methods Nonlinear Anal.},
%  FJOURNAL = {Topological Methods in Nonlinear Analysis},
    VOLUME = {19},
      YEAR = {2002},
    NUMBER = {1},
     PAGES = {109--121},
}
		
\bib{ktejde}{article}{
    AUTHOR = {Karakostas, G. L.},
    AUTHOR = {Tsamatos, P. Ch.},
     TITLE = {Multiple positive solutions of some {F}redholm integral
              equations arisen from nonlocal boundary-value problems},
   JOURNAL = {Electron. J. Differential Equations},
%  FJOURNAL = {Electronic Journal of Differential Equations},
      YEAR = {2002},
     PAGES = {no.\ 30, 17 pp. (electronic)},
}

\bib{K}{article}{
   author={Krasnosel{\cprime}slki{\u\i}, M. A.},
   title={Some problems of nonlinear analysis},
   conference={
      title={American Mathematical Society Translations, Ser. 2, vol. 10},
   },
   book={
      publisher={American Mathematical Society, Providence, R.I.},
   },
   date={1958},
   pages={345--409},
}

\bib{KL1}{article}{
    AUTHOR = {Lan, K. Q.},
     TITLE = {Eigenvalues of second order differential equations with
              singularities},
%      NOTE = {Dynamical systems and differential equations (Kennesaw, GA,
%              2000)},
   JOURNAL = {Discrete Contin. Dynam. Systems},
%  FJOURNAL = {Discrete and Continuous Dynamical Systems},
      YEAR = {2001},
    NUMBER = {Added Volume},
     PAGES = {241--247},
}

\bib{KL2}{article}{
    AUTHOR = {Lan, K. Q.},
     TITLE = {Multiple eigenvalues for singular {H}ammerstein integral
              equations with applications to boundary value problems},
   JOURNAL = {J. Comput. Appl. Math.},
%  FJOURNAL = {Journal of Computational and Applied Mathematics},
    VOLUME = {189},
      YEAR = {2006},
    NUMBER = {1-2},
     PAGES = {109--119},
}
		
\bib{KL3}{article}{
    AUTHOR = {Lan, K. Q.},
     TITLE = {Eigenvalues of semi-positone {H}ammerstein integral equations
              and applications to boundary value problems},
   JOURNAL = {Nonlinear Anal.},
 % FJOURNAL = {Nonlinear Analysis. Theory, Methods \& Applications. An
  %            International Multidisciplinary Journal. Series A: Theory and
  %            Methods},
    VOLUME = {71},
      YEAR = {2009},
    NUMBER = {12},
     PAGES = {5979--5993},
}

\bib{LWe}{article}{
    AUTHOR = {Lan, K. Q.},
    AUTHOR = {Webb, J. R. L.},
     TITLE = {Positive solutions of semilinear differential equations with
              singularities},
   JOURNAL = {J. Differential Equations},
%  FJOURNAL = {Journal of Differential Equations},
    VOLUME = {148},
      YEAR = {1998},
    NUMBER = {2},
     PAGES = {407--421},
}
		
\bib{Leb}{book}{
title={An Introduction to Functional Analysis in Computational Mathematics},
author={Lebedev, V. I.},
publisher={Birkh\"auser},
address={Boston, Basel, Berlin},
date={1997}
}

\bib{LW}{article}
{title={An extension of Jentzsch's theorem to nonlinear Hammerstein operators},
author={Leggett, R. W.},
author={Williams, L. R.},
journal={J. Math. Anal. Appl.},
volume={60},
date={1977},
pages={248--254},
}

\bib{Lojasiewicz}{book}{
title={An Introduction to the Theory of Real Functions},
author={\L ojasiewicz, S.},
publisher={John Wiley \& Sons, Ltd.},
address={Chichester},
date={1988},
}

\bib{Ma00}{article}{
    AUTHOR = {Ma, R.},
     TITLE = {Multiplicity of positive solutions for second-order
              three-point boundary value problems},
   JOURNAL = {Comput. Math. Appl.},
  %FJOURNAL = {Computers \& Mathematics with Applications. An International
  %            Journal},
    VOLUME = {40},
      YEAR = {2000},
    NUMBER = {2-3},
     PAGES = {193--204},
}

\bib{rma}{article}{
    AUTHOR = {Ma, R.},
     TITLE = {A survey on nonlocal boundary value problems},
   JOURNAL = {Appl. Math. E-Notes},
%  FJOURNAL = {Applied Mathematics E-Notes},
    VOLUME = {7},
      YEAR = {2007},
     PAGES = {257--279},
}

\bib{Ma}{article}{
   author={Ma, R.},
   title={Nonlinear periodic boundary value problems with sign-changing
   Green's function},
   journal={Nonlinear Anal.},
   volume={74},
   date={2011},
   number={5},
   pages={1714--1720},
}

\bib{M}{book}{
   author={Martin, R. H., Jr.},
   title={Nonlinear Operators and Differential Equations in Banach Spaces},
   note={Pure and Applied Mathematics},
   publisher={Wiley-Interscience, New York-London-Sydney},
   date={1976},
}

\bib{Maz}{book}{
   author={Maz'ya, V.},
   title={Sobolev Spaces with Applications to Elliptic Partial Differential Equations},
   note={Grundlehren der
mathematischen Wissenschaften, vol. 342.},
   publisher={Springer, Heidelberg},
   date={2011},
}

\bib{sotiris}{article}{
    AUTHOR = {Ntouyas, S. K.},
     TITLE = {Nonlocal initial and boundary value problems: a survey},
 BOOKTITLE = {Handbook of differential equations: ordinary differential
              equations. {V}ol. {II}},
     PAGES = {461--557},
 PUBLISHER = {Elsevier B. V., Amsterdam},
      YEAR = {2005},
}

\bib{Park}{article}{
title={Generalizations of the Krasnoselskii fixed point theorem},
author={Park, S.},
journal={Nonlinear Anal.},
volume={67},
date={2007},
pages={3401--3410},
}

\bib{Picone}{article}{
    AUTHOR = {Picone, M.},
     TITLE = {Su un problema al contorno nelle equazioni differenziali
              lineari ordinarie del secondo ordine},
   JOURNAL = {Ann. Scuola Norm. Sup. Pisa Cl. Sci.},
 % FJOURNAL = {Annali della Scuola Normale Superiore di Pisa. Classe di
 %             Scienze},
    VOLUME = {10},
      YEAR = {1908},
     PAGES = {95},
}

\bib{Stik}{article}{
    AUTHOR = {{\v{S}}tikonas, A.},
     TITLE = {A survey on stationary problems, {G}reen's functions and
              spectrum of {S}turm--{L}iouville problem with nonlocal boundary
              conditions},
   JOURNAL = {Nonlinear Anal. Model. Control},
  %FJOURNAL = {Lithuanian Association of Nonlinear Analysts (LANA). Nonlinear
  %            Analysis. Modelling and Control},
    VOLUME = {19},
      YEAR = {2014},
    NUMBER = {3},
     PAGES = {301--334},
   }

   \bib{Wa}{article}{
title={On convergence of Fourier series of functions of generalized bounded variation},
author={Waterman, D.},
journal={Studia Math.},
volume={44},
date={1972},
pages={107--117},
}

\bib{W2}{article}{
   author={Webb, J. R. L.},
   title={Boundary value problems with vanishing Green's function},
   journal={Commun. Appl. Anal.},
   volume={13},
   date={2009},
   number={4},
   pages={587--595},
}

\bib{W1}{article}{
   author={Webb, J. R. L.},
   title={Positive solutions of a boundary value problem with integral
   boundary conditions},
   journal={Electron. J. Differential Equations},
   date={2011},
   pages={no. 55, 10 pages},
}

\bib{I-W2}{article}{
title={Positive solutions of nonlocal boundary value problems: a unified approach},
author={Webb, J. R. L.},
author={Infante, G.},
journal={J. London Math. Soc.},
volume={74},
number={2},
date={2006},
pages={673--693},
}

\bib{Whyburn}{article}{
    AUTHOR = {Whyburn, W. M.},
     TITLE = {Differential equations with general boundary conditions},
   JOURNAL = {Bull. Amer. Math. Soc.},
%  FJOURNAL = {Bulletin of the American Mathematical Society},
    VOLUME = {48},
      YEAR = {1942},
     PAGES = {692--704},
}

\end{biblist}
\end{bibdiv}
\end{document}